\theoremstyle{plain}
\newtheorem{theorem}{Theorem}[section]
\newtheorem{proposition}[theorem]{Proposition}
\newtheorem{lemma}[theorem]{Lemma}
\theoremstyle{definition}
\newtheorem{definition}[theorem]{Definition}
\newtheorem{example}[theorem]{Example}
\theoremstyle{remark}
\newtheorem{remark}[theorem]{\bf Remark}
\newtheorem{note}[theorem]{\bf Note}
\begin{document}

\title[]{Visualization for Petrov's odd unitary group} 

\author{A.A. Ambily}
\author{V.K. Aparna Pradeep}

\address{\newline Department of Mathematics \hfill{Department of Mathematics}
\newline Cochin University of Science and Technology \hfill{Cochin University of Science and Technology} 
\newline{Cochin University P.O.} \hfill {Cochin University P.O.}
\newline 682022, Kerala \hfill{682022, Kerala}
\newline India \hfill{India}
\newline {\it{Email: }\tt ambily@cusat.ac.in, aaambily@gmail.com }\hfill {\it{Email: }\tt aparnapradeepvk@gmail.com} }

\begin{abstract}
In this article we define a set of matrices analogous to Vaserstein-type matrices which was introduced in the paper `Serre's problem on projective modules over polynomial rings and algebraic $K$-theory' by Suslin-Vaserstein in 1976. We prove that these are elementary linear matrices.  Also, under some conditions, these matrices belong to Petrov's odd unitary group which is a generalization of all classical groups. We also prove that the group generated by these matrices is a conjugate of the Petrov's odd elementary hyperbolic unitary group when the ring is commutative.
\end{abstract}

\keywords{Quadratic modules, Odd unitary groups, Elementary hyperbolic unitary groups, ESD transvections, Vaserstein-type matrices}

\vspace{1mm}

\subjclass[2010]{19G99, 13C10, 11E70, 20H25}

\date{\today}

\maketitle
\setstcolor{red}

\section{Introduction}

In \cite{VasersteinSuslin1976}, L.N. Vaserstein and A.A. Suslin studied the freeness of projective modules over polynomial rings. They proved that a positive solution exists for Serre's problem for polynomial rings in five variables over an arbitrary field and also for polynomial rings in four variables over a principal ideal domain. For a given alternating matrix $\varphi$ of size $2n$,  Vaserstein proved the existence of two elementary matrices of size $2n-1$ which can be modified to get symplectic matrices with respect to $\varphi$. 

\vspace{2mm}

In \cite{ChattopadhyayRao2016}, P. Chattopadhyay and R.A. Rao used Vaserstein's construction for defining relative elementary symplectic group with respect to a given invertible alternating matrix. They  proved dilation principle and local-global principle for the relative elementary symplectic group and used these results for the study of symplectic transvection groups. They have also established the equality of the orbit space of a unimodular element under the action of the linear group, the symplectic group with respect to the standard symplectic form, and the symplectic group with respect to an invertible alternating matrix.

\vspace{2mm}

In \cite{Petrov2005}, V.A. Petrov introduced a new type of classical-like group known as odd unitary groups which generalizes all the classical Chevalley groups, the groups ${\rm U}_{2n+1}(R)$ of E. Abe \cite{Abe1977}, and the classical-like groups such as Bak's hyperbolic unitary groups \cite{Bak1969} and G. Tang's Hermitian groups \cite{Tang1998}. The importance of this group is that it include the odd dimensional orthogonal group ${\rm O}_{2n+1}$ as well as the groups ${\rm U}_{2n+1}(R)$ of E. Abe which was not covered in the theory of quadratic or Hermitian groups. Later A. Bak and R. Preusser \cite{BakPreusser2018} studied a subclass of Petrov’s odd unitary groups which contain all the classical Chevalley groups, and classified the E-normal subgroups of the members of this subclass. The E-normal subgroups of odd unitary groups are also studied by W. Yu, Y. Li and H. Liu in \cite{YuLiLiu2018} and also by R. Preusser in \cite{Preusser2020}.

\vspace{2mm}

In a recent preprint \cite{AmbilyRao2021},  A.A. Ambily and R.A. Rao  have defined the Vaserstein-type matrices for DSER elementary orthogonal transformations and proved that the group generated by these matrices is a congruent of the DSER group. In this paper, we do an analogous result for Petrov's odd elementary hyperbolic unitary group. Since Petrov's odd unitary group is a generalization of all classical groups and it also coincides with the DSER elementary orthogonal group in the commutative ring case with pseudo involution $a \mapsto -a$, the result in this paper generalizes analogous results in literature. 

\vspace{2mm}

Here we define two matrices, $L(v)$ and $L(v)^{*}$, similar to the construction by Vaserstein which reduces to Vaserstein-type matrices and the matrices defined by Ambily-Rao in certain special cases. We prove that for arbitrary $v \in R^{n+2m-1}$, these matrices belong to ${\rm E}_{n+2m}(R)$. We also prove that if $v=(a_1, \ldots, a_{n+2m-1}) \in R^{n+2m-1}$ satisfies the condition 
$$\overline{a}_1-a_1=\bar{1}^{-1} \overline{(a_2, \ldots, a_{n+2m-1})}\varPsi (a_2, \ldots, a_{n+2m-1})^t = q(a_2, \ldots, a_{n+2m-1}),$$ then $L(v) \in {\rm U}_{2m}(R, \mathfrak L_{\max})$ and for those $v$ satisfying $$\overline{\bar{1}a_1}-\bar{1}a_1=\overline{(a_2, \ldots, a_{n+2m-1})}(\widetilde{\psi}'_{m-1} \perp (\bar{1}^{-1})^2 \varphi)^t (a_2, \ldots, a_{n+2m-1})^t,$$ 
$L(v)^{*}$ is an isometry, but need not belongs to the odd unitary group. These results involve more calculations than the case of Bak's unitary group because of the fact that, for a transvection to be an element in Petrov's odd unitary group they have to be an isometry as well as congruent to the identity modulo the odd form parameter taken. The latter condition is an additional requirement in this group.

\vspace{2mm}

The following is the main result proved in this article.

\begin{theorem}\label{A1}
	Let $R$ be a commutative ring with unity. The group generated by $L(v)$ and $L(v)^{*}$ for $v \in R^{2m+n-1}$ is congruent to the Petrov's odd elementary hyperbolic unitary group ${\rm EU}_{2m}(R,\mathfrak L_{\max})$.
\end{theorem}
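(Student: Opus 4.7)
The plan is to exhibit an explicit invertible matrix $P \in {\rm GL}_{n+2m}(R)$ --- essentially a permutation matrix that reorders the standard basis so that the indexing used in the construction of $L(v)$ and $L(v)^{*}$ matches the hyperbolic ordering defining the generators of ${\rm EU}_{2m}(R,\mathfrak L_{\max})$ --- and then prove the two inclusions
\[
P\bigl\langle L(v),\,L(v)^{*} : v \in R^{n+2m-1}\bigr\rangle P^{-1} \;\subseteq\; {\rm EU}_{2m}(R,\mathfrak L_{\max})
\]
and its reverse. Because both sides are groups generated by explicit families, each containment reduces to expressing one family of generators in terms of the other, after conjugation by $P$.

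For the forward inclusion I would fix an arbitrary $v \in R^{n+2m-1}$, compute $P L(v) P^{-1}$ (and similarly $P L(v)^{*} P^{-1}$) block by block, and then write the result as a product of short root $(T_{ij}(a))$ and long root $(T_{i}(b))$ ESD transvections, using the commutativity of $R$ to collect cross terms and to verify the quadratic conditions that force membership in ${\rm EU}_{2m}$. The earlier result that $L(v) \in {\rm E}_{n+2m}(R)$ for arbitrary $v$ already gives a factorization into ordinary elementary matrices; the content of this step is to repackage those elementary factors as ESD transvections that preserve the odd hyperbolic form.

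For the reverse inclusion I would take each short and long root ESD generator of ${\rm EU}_{2m}(R,\mathfrak L_{\max})$ and recover it as $P L(v) P^{-1}$ (or $P L(v)^{*} P^{-1}$) for a carefully chosen $v$ whose support consists of only one or two coordinates, so that the quadratic condition
\[
\overline{a}_1 - a_1 = \bar{1}^{-1}\,\overline{(a_2,\ldots,a_{n+2m-1})}\,\varPsi\,(a_2,\ldots,a_{n+2m-1})^t
\]
collapses to a trivial identity and the corresponding $L(v)$ sits in the odd unitary group. A handful of basis choices should cover all the short root generators, and analogous sparse choices of $v$ for $L(v)^{*}$ should cover the long root generators.

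The main obstacle is the bookkeeping around Petrov's odd form parameter $\mathfrak L_{\max}$: as flagged in the introduction, membership in ${\rm EU}_{2m}(R,\mathfrak L_{\max})$ requires not only that a transvection be an isometry but also that it be congruent to the identity modulo $\mathfrak L_{\max}$, an additional requirement beyond what appears in Bak's unitary setting. Verifying this congruence for the conjugates $P L(v) P^{-1}$ and $P L(v)^{*} P^{-1}$, and conversely verifying that the sparse $v$ used to recover a given ESD generator satisfies both quadratic conditions simultaneously, is where the bulk of the computation lies; commutativity of $R$ is precisely what reduces the otherwise non-trivial form-parameter relations to scalar identities and makes the two-sided inclusion work out.
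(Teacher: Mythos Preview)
Your overall architecture is right: the paper also fixes the permutation matrix
\[
P=\begin{pmatrix}0_{2m\times n}&I_{2m}\\ I_n&0_{n\times 2m}\end{pmatrix},\qquad P^{-1}=P^{t},
\]
and proves the two inclusions at the level of generators. But your execution is considerably more laborious than what the paper actually does, and you are missing the one observation that makes the argument short.

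The paper does \emph{not} decompose $P^{t}L(v)P$ as a product of root transvections. A direct block computation shows that $P^{t}L(v)P$ is \emph{equal} to a single transvection $T_{-1}(u_1,a_1)$, and likewise $P^{t}L(v)^{*}P=T_{1}(u_2,-\bar 1\,a_1)$, for explicit $u_1,u_2$ built linearly from the coordinates of $v$. One then checks that $(u_1,a_1),(u_2,-\bar 1\,a_1)\in\mathfrak L_{\max}$, which gives the forward inclusion immediately. For the reverse inclusion the paper does not run through all short--root generators $T_{ij}(r)$ with sparse $v$; instead it invokes Petrov's Proposition (restated in the paper as Proposition~\ref{B1}) that ${\rm EU}_{2m}(R,\mathfrak L_{\max})$ is already generated by the $T_{\pm 1}(v,a)$ with $\langle e_{1},v\rangle=\langle e_{-1},v\rangle=0$. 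The same explicit formula, read backwards, then exhibits each $T_{-1}(v,a)$ as $P^{t}L(u)P$ and each $T_{1}(v,a)$ as $P^{t}L(u')^{*}P$ for suitable $u,u'$, with no sparsity hypothesis needed.

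So the difference is this: you propose a many--to--many matching (products of root transvections on one side, sparse $v$'s on the other), whereas the paper establishes a one--to--one correspondence between the generating family $\{L(v),L(v)^{*}\}$ and the generating family $\{T_{-1}(\cdot,\cdot),T_{1}(\cdot,\cdot)\}$ after conjugation by $P$. Your route would eventually succeed, but it forces you to handle all the $T_{ij}$ separately and to verify the odd--form--parameter congruence for each factor in a product, which is exactly the bookkeeping you flag as the main obstacle. Using Proposition~\ref{B1} removes that obstacle entirely: there is only one isometry/$\mathfrak L_{\max}$ check per generator, and it is the same computation already carried out for $L(v)$.
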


We can study the relative version of Petrov's odd unitary group using this result analogous to the study done by Chattopadhyay and Rao in  \cite{ChattopadhyayRao2016}. This theorem will help us to visualize the odd elementary hyperbolic unitary group in a simpler form and enable us to study the structure of that group in more depth.

\section{Preliminaries} 

\subsection{Odd Unitary Groups}
Odd unitary groups were defined by V.A. Petrov in \cite{Petrov2005}.
\begin{definition}
	Let $R$ be an associative ring with 1. An additive map $\sigma:R \to R$ defined by $r \mapsto \bar{r}$ satisfying the properties $\overline{r_{1}r_{2}}=\bar{r}_{2} \bar{1}^{-1} \bar{r}_{1}$ and $\bar{\bar{r}}=r$ for all $r,r_{1}, r_{2} \in R$ is called a pseudoinvolution on $R$.
\end{definition}

Let $R$ be a ring with pseudoinvolution and $V$ be a right $R$-module. A map $\langle , \rangle:V \times V \to R$ is called a sesquilinear form on $V$ if it is biadditive and satisfy the equation
$$\langle ur,vs \rangle =\bar{r}\bar{1}^{-1}\langle u,v \rangle s,$$ for all $u,v \in V$ and $r,s \in R$.

A sesquilinear form is anti-Hermitian if it also satisfies $ \langle u,v \rangle =-\overline{\langle v,u\rangle}$ for all $u,v \in V$.

\begin{definition}
	Let $V$ be a right $R$-module and $\langle, \rangle$ be an anti-Hermitian form on it. The set $\mathfrak H=V \times R$ is a group with composition defined as:
	$$(u,r)\dot{+}(v,s)=(u+v,r+s+\langle u,v \rangle).$$ Here the identity element is $(0,0)$ and the inverse of $(u,r)$ is $(-u,-r+\langle u,u\rangle)$ which is denoted by $\dot{-}(u,r)$. This group is called the {\it Heisenberg group} of the form $\langle, \rangle$.
\end{definition}

Consider the right action of $R$ on $\mathfrak H$ defined by $(u,r) \leftharpoonup s=(us, \bar{s}\bar{1}^{-1}rs).$
The {\it{trace}} map on $\mathfrak H,~ tr:\mathfrak H \to R$ is a group homomorphism defined by   
$tr((u,r))=r-\bar{r}-\langle u,u \rangle.$

Define $\mathfrak L_{\min}$ and $\mathfrak L_{\max}$ as follows.
$$\mathfrak L_{\min}=\{(0,r+\bar{r}): r \in R\} ,\hspace{2mm} \mathfrak L_{\max}=\{\zeta \in \mathfrak H: tr(\zeta)=0\}.$$

The subsets $\mathfrak L_{\min}$ and $\mathfrak L_{\max}$ are subgroups of $\mathfrak H$. These subgroups are stable under the action of $R$ and satisfies $\mathfrak L_{\min} \leq \mathfrak L_{\max}$.

\begin{definition}
	An \emph{odd form parameter} is a subgroup $\mathfrak L$ of $\mathfrak H$ satisfying $\mathfrak L_{\min} \leq \mathfrak L \leq  \mathfrak L_{\max}$ and is stable under the action of $R$.
\end{definition}

If $\mathfrak L$ is an odd form parameter corresponding to the sesquilinear form $\langle, \rangle$, then the pair $q=(\langle, \rangle, \mathfrak L)$ is called an {\it odd quadratic form} and the pair $(V,q)$ is called an {\it odd quadratic space}.

Let $(V,q)$ be an odd quadratic space. The even part of the odd form parameter $\mathfrak{L}$ is defined as $\mathfrak{L}_{ev}=\{ a \in R:(0,a) \in \mathfrak L \}$ and the even part of the quadratic space is defined as 
$$V_{ev}=\{ u \in V:(u,a) \in \mathfrak L \mbox{ for a certain $a \in R$} \}.$$ 

Let $V$ and $V'$ be two $R$-modules equipped with form parameters $\mathfrak L$ and $\mathfrak L'$ respectively. An isometry $f:V \to V'$ preserves the form parameters if $(f(v),r) \in \mathfrak L'$ for all $(v,r) \in \mathfrak L$. Two such isometries $f$ and $g$ are said to be equivalent modulo $\mathfrak L'$ if $(f(v)-g(v),\langle g(v)-f(v), g(v)\rangle) \in \mathfrak L'$ for all $v \in V$. It can be verified that this relation is an equivalence relation on the set of all isometries from $V$ to $V'$.
If $f$ and $g$ are equivalent modulo $\mathfrak L$, we denote it by $f \cong g({\rm mod} ~ \mathfrak L)$.
\begin{definition}
	The \emph{odd unitary group} denoted by ${\rm U}(V,q)$ is the group of all bijective isometries $f$ on $V$ such that $f \cong e ({\rm mod} ~ \mathfrak L),$ where $e$ is the identity map on $V$.
\end{definition}

Let $u,v$ be elements of an odd quadratic space $V$ and $r$ be an element of $R$ such that $\langle u,v\rangle=0,~ (u,0) \in \mathfrak L$ and $(v,r) \in \mathfrak L$. The transformations on $V$ of the form   $T_{u,v}(a)$ defined by
$$T_{u,v}(r)(w)=w+u\bar{1}^{-1}(\langle v,w \rangle +r\langle u,w \rangle )+v\langle u,w \rangle,$$ are known as {\it Eichler-Siegel-Dickson transvections}.

\begin{lemma}[\cite{Petrov2005}, Lemma 1]
	The transvections $T_{u,v}(a)$ lie in ${\rm U}(V,q)$.
\end{lemma}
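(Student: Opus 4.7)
The plan is to verify in turn that $T := T_{u,v}(r)$ is $R$-linear, an isometry, bijective, and equivalent to the identity modulo $\mathfrak L$; together these will place $T$ in ${\rm U}(V,q)$. $R$-linearity is immediate from the sesquilinearity of $\langle,\rangle$, so the substance lies in the remaining three checks. Before anything else, I would extract the consequences of the hypotheses that drive everything: since $\mathfrak L \subseteq \mathfrak L_{\max}$, the containment $(u,0)\in\mathfrak L$ forces $\langle u,u\rangle = 0$, and $(v,r)\in\mathfrak L$ forces $\langle v,v\rangle = r-\bar r$; the hypothesis $\langle u,v\rangle = 0$ combined with anti-Hermitian symmetry yields $\langle v,u\rangle = 0$. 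These four vanishings, together with the sesquilinearity rules $\langle xs,y\rangle = \bar s\,\bar 1^{-1}\langle x,y\rangle$ and $\langle x,ys\rangle = \langle x,y\rangle s$, are essentially the only ingredients.

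For the isometry check I would abbreviate $\alpha_i = \langle v,w_i\rangle$ and $\beta_i = \langle u,w_i\rangle$, so that $T(w_i) = w_i + u\bar 1^{-1}(\alpha_i + r\beta_i) + v\beta_i$, and expand $\langle T(w_1),T(w_2)\rangle$ by biadditivity into nine terms. Every term containing $\langle u,v\rangle$, $\langle v,u\rangle$, or $\langle u,u\rangle$ vanishes, while the term containing $\langle v,v\rangle$ contributes $\overline{\beta_1}\,\bar 1^{-1}(r-\bar r)\beta_2$; after applying the sesquilinearity rules the remaining scalars reassemble and cancel this contribution against the other surviving cross terms, leaving $\langle w_1,w_2\rangle$. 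For bijectivity I would exhibit a two-sided inverse of the same ESD shape by making the ansatz $T_{u,v}(r')\circ T_{u,v}(r) = \mathrm{id}$; the composition formula obtained by the same nine-term expansion collapses under the vanishings above to a single scalar equation in $r'\in R$, which can be solved explicitly.

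For the equivalence $T\cong e \pmod{\mathfrak L}$ the key observation is that $T(w)-w = ua + vb$ with $a = \bar 1^{-1}(\langle v,w\rangle + r\langle u,w\rangle)$ and $b = \langle u,w\rangle$. Since $\mathfrak L$ is stable under the right $R$-action and under $\dot{+}$, the element
\[
\bigl((u,0)\leftharpoonup a\bigr)\,\dot{+}\,\bigl((v,r)\leftharpoonup b\bigr) \;=\; \bigl(ua + vb,\; \bar b\,\bar 1^{-1} rb + \langle ua,vb\rangle\bigr)
\]
lies in $\mathfrak L$, and the cross term $\langle ua,vb\rangle = \bar a\,\bar 1^{-1}\langle u,v\rangle b$ vanishes. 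Its first coordinate is already $T(w)-w$, so the only remaining task is to check that the second coordinate $\bar b\,\bar 1^{-1} rb$ differs from $\langle w-T(w),w\rangle$ by an element of $\mathfrak L_{\min}$. Expanding $\langle w-T(w),w\rangle$ with the sesquilinearity rules and the same four vanishings, the discrepancy will appear as a trace of the form $s + \bar s$, which lies in $\mathfrak L_{\min}\subseteq\mathfrak L$.

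The most delicate step is this final second-coordinate match, because it forces careful bookkeeping with the twist $\bar 1^{-1}$ under the pseudoinvolution rule $\overline{r_1r_2} = \bar r_2\bar 1^{-1}\bar r_1$ and with the induced identity $\overline{\bar 1^{-1}} = \bar 1^{2}$. Everything else is a mechanical expansion governed by the four vanishing identities recorded at the outset, so the proof essentially reduces to performing these expansions cleanly and recognising the residues as traces.
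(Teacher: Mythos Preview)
The paper does not actually prove this lemma; it merely cites it as Lemma~1 of Petrov~\cite{Petrov2005} and moves on. So there is no in-paper argument to compare your proposal against, and your outline is essentially the standard direct verification one finds in Petrov's article.

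That said, your bijectivity step contains a genuine slip. The ansatz $T_{u,v}(r')\circ T_{u,v}(r)=\mathrm{id}$ with the \emph{same} pair $(u,v)$ cannot succeed: writing $w'=T_{u,v}(r)(w)$, the vanishings $\langle u,u\rangle=\langle u,v\rangle=0$ give $\langle u,w'\rangle=\langle u,w\rangle$, so the composite picks up the term $v\langle u,w'\rangle + v\langle u,w\rangle = 2v\langle u,w\rangle$, which no choice of $r'$ can remove. The correct candidate for the inverse is $T_{u,-v}(-\bar r)$. This is again an ESD transvection because $\dot{-}(v,r)=(-v,-r+\langle v,v\rangle)=(-v,-\bar r)\in\mathfrak L$, and with this sign flip the $v$-terms cancel while the residual $u$-coefficient becomes $r-(r-\bar r)-\bar r=0$. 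Apart from this correction, the remainder of your plan---the nine-term isometry expansion and the $\mathfrak L$-congruence via $(u,0)\leftharpoonup a\ \dot{+}\ (v,r)\leftharpoonup b$ followed by an $\mathfrak L_{\min}$ adjustment---is the right route and should go through as you describe.
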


A pair of  vectors $(u,v)$ such that $\langle u,v \rangle =1,~(u,0) \in \mathfrak L,~ (v,0) \in \mathfrak L$ is called a hyperbolic pair. 
Let $\mathbb H$ be odd quadratic space spanned by vectors $e_{1}$ and $e_{-1}$ such that $\langle e_{1},e_{-1} \rangle =1$, equipped with the odd form parameter $\mathfrak L$ generated by $(e_{1},0)$ and $(e_{-1},0)$. Denote the orthogonal sum of $m$ copies of $\mathbb H$ by $\mathbb H^{m}$. 

\vspace{2mm}
Consider an odd quadratic space $V_{0}$ equipped with an odd quadratic form $q_0=(\langle, \rangle_0, \mathfrak L_0)$. The orthogonal sum $V=\mathbb H^{m} \oplus V_0$ is called the {\it odd hyperbolic unitary group} and is denoted by ${\rm U}_{2m}(R, \mathfrak L)$.

\begin{definition}
	The greatest number $n$ satisfying the condition that there exist $n$ mutually orthogonal hyperbolic pairs in $(V,q)$ is called the Witt index of $(V,q)$, denoted by ${\rm ind(V,q)}$.
\end{definition}

Let $V$ be an odd quadratic space that has Witt index at least $n$. Then we can choose an embedding of $\mathbb H^{n}$ to $V$. Fix any such embedding. Then we have elements $\{e_{i}\}_{i=1, \ldots, n,-n, \ldots, -1}$ in $V$ such that $\langle e_{i},e_{j}\rangle =0$ for $i \neq -j,$ $\langle e_{i},e_{-i} \rangle =1$ for $i=1, \ldots, n$ and $(e_{i},0) \in \mathfrak L$. Let $V_{0}$ denote the orthogonal complement of $\langle e_{1}, \ldots, e_{n},e_{-n}, \ldots, e_{-1} \rangle$ in $V$ and $\langle, \rangle_{0}$ and $\mathfrak L_{0}$ be the restrictions of $\langle, \rangle$ and  $\mathfrak L$ respectively to $V_{0}$. Then $V$ is isometric to the odd hyperbolic space $\mathbb H^{n} \oplus V_{0}$ with the quadratic form $(\langle, \rangle_{0}, \mathfrak L_{0})$. Thus ${\rm U}(V,q)={\rm U}_{2n}(R, \mathfrak L_{0})$.

\vspace{2mm}

Consider a non-reduced root system of type $BC_{l}$ consisting of vectors $\chi_{i} \pm \chi_{j},~ \pm \chi_{i}$ and $\pm 2 \chi_{i}$, where $\{\chi_{i}\}_{i=1, \ldots ,n}$ is an orthonormal basis for the $n$-dimensional Euclidean space. 
Corresponding to the roots we can define elementary transvections as follows.
\begin{align*}
&T_{\chi_{i}-\chi_{j}}(r)={T_{i,j}(r)=T_{e_{-j},-e_{i}r\varepsilon_{j}}(0),~ r \in R},\\
&T_{\chi_{i}}(u,r)={T_{i}(u,r)=T_{e_{i},u\varepsilon_{-i}}(-\bar{\varepsilon}_{-i}\bar{1}^{-1}r\varepsilon_{-i}),~ (u,r) \in \mathfrak L},\\
&T_{2\chi_{i}}(r)=T_{i}(0,r),~ r \in \mathfrak L_{ev},
\end{align*}
where $i,j \in \{-m, \ldots 1, -1, \ldots, m\}$ and $\varepsilon_{i}= \begin{cases}
\bar{1}^{-1},~ i>0\\
-1, ~ i<0.
\end{cases}$

\vspace{2mm}

The odd elementary hyperbolic unitary group ${\rm EU}_{2m}(R, \mathfrak L)$ is the subgroup of ${\rm U}_{2m}(R, \mathfrak L)$ generated by all elementary transvections. 

\section{Matrix form of elementary transvections}

Let $R$ be a commutative ring with identity. Here we write the matrices for the generators of ${\rm EU}_{2m}(R,\mathfrak{L})$ when the odd quadratic space $V_0$ is free of rank $n$. Let $\{ v_{1},\ldots, v_{n},e_{1},e_{-1},\ldots,e_{m},e_{-m} \}$ be a basis for the odd hyperbolic unitary space $V = \mathbb H^{m} \oplus V_{0}$.   Let the unitary space $V$ be equipped with  anti-Hermitian form $\langle , \rangle$, i.e., $\langle u,v \rangle=-\overline{\langle v,u \rangle}$ for all $u,v \in V$. 
We use the following important result proved by Petrov in \cite{Petrov2005}.\begin{proposition}[\cite{Petrov2005}, Proposition 1] \label{B}
	The group ${\rm EU}_{2l}(R, \mathfrak L)$ coincides with the group generated by all elements of the form $T_{e_{\pm 1},v}(a),$ where $\langle e_1, v \rangle=\langle e_{-1}, v \rangle=0$ and $(v,a) \in \mathfrak L$.
\end{proposition}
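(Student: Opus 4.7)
The plan is to prove the two inclusions between $\mathrm{EU}_{2l}(R, \mathfrak{L})$ and the subgroup $G$ generated by all $T_{e_{\pm 1}, v}(a)$ satisfying the stated orthogonality and the condition $(v,a)\in\mathfrak{L}$.

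For the inclusion $G \subseteq \mathrm{EU}_{2l}(R, \mathfrak{L})$, I would decompose a typical generator of $G$ along the basis. The orthogonality hypothesis forces $v \in \langle e_{\pm 2}, \ldots, e_{\pm m}\rangle \oplus V_0$, so $v = \sum_{|i|\neq 1} e_i r_i + u_0$ with $u_0 \in V_0$. Using an additivity-with-correction identity for ESD transvections in the second slot (expressing $T_{u, v_1+v_2}(\ldots)$ as $T_{u, v_1}(\ldots)\cdot T_{u, v_2}(\ldots)$ up to an extra $T_{u,0}$-factor whenever $\langle u, v_i\rangle=0$) iteratively, together with a splitting of $(v, a)\in\mathfrak{L}$ as a Heisenberg-sum of summands each again in $\mathfrak{L}$, one factors $T_{e_{\pm 1}, v}(a)$ into a product whose factors have second slot a single $e_i r_i$ or a single vector in $V_0$. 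Comparing with the defining formulas above, each such factor is one of $T_{\chi_{\pm 1}\pm\chi_j}$, $T_{\chi_{\pm 1}}(u_0, r)$, or $T_{2\chi_{\pm 1}}(r)$, and hence lies in $\mathrm{EU}_{2l}(R, \mathfrak{L})$.

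For the reverse inclusion I split the elementary generators into those involving index $\pm 1$ and those that do not. The short-root generators $T_{\chi_{\pm 1}}(u, r)$ and the double short-root generators $T_{2\chi_{\pm 1}}(r)$ are, by their defining formulas, literally of the shape $T_{e_{\pm 1}, v}(a)$; the long-root generators $T_{\chi_j - \chi_{\pm 1}}(r)=T_{e_{\mp 1},\,-e_j r\varepsilon_{\pm 1}}(0)$ are already in the required form, while their ``reversed'' counterparts $T_{\chi_{\pm 1} - \chi_j}(r)=T_{e_{-j},\,-e_{\pm 1} r\varepsilon_j}(0)$ can be rewritten as $T_{e_{\pm 1},\,e_{-j} t}(0)$ for a suitable $t$ via the symmetry relation
\[
T_{e_k,\,e_l s}(0) \;=\; T_{e_l,\,e_k t}(0) \qquad (k\neq -l,\ \ t=\bar 1^{-1}\bar s\bar 1^{-1}),
\]
which is a direct consequence of the pseudoinvolution identities $\overline{r_1 r_2}=\bar r_2\bar 1^{-1}\bar r_1$ and $\bar{\bar r}=r$. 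The remaining generators, with both indices $|i|,|j|\neq 1$, are manufactured via the Chevalley commutator formulas of the $BC_l$ root system, using index $1$ as a pivot: for instance
\[
[T_{\chi_i-\chi_1}(a),\ T_{\chi_1-\chi_j}(b)] \;=\; T_{\chi_i-\chi_j}(ab),
\]
and analogous commutators $[T_{\chi_i-\chi_1},\,T_{\chi_1}(u,r)]$ and $[T_{\chi_i-\chi_{-1}},\,T_{\chi_i+\chi_1}]$ produce $T_{\chi_i}(u,r)$ and $T_{2\chi_i}(r)$ respectively for $|i|\neq 1$. In each case both factors of the commutator are already in $G$ by the previous items.

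The main obstacle is the bookkeeping in these $BC_l$-type commutator identities in the presence of the pseudoinvolution and the odd form parameter: sign conventions, the $\bar 1^{-1}$-twists built into the definitions of the elementary transvections, and the correction pairs $(v,a)\in\mathfrak{L}$ all have to be tracked simultaneously, and at each step one must verify that the pair produced still lies in $\mathfrak{L}$ and not merely in $\mathfrak{L}_{\max}$. Once the explicit commutator formulas are written down, however, the structural argument above reduces the proposition to a finite check that follows from the definition of the ESD transvections and the $R$-biadditivity of $\langle\,,\,\rangle$.
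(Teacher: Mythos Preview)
The paper does not contain a proof of this proposition: it is quoted verbatim from \cite[Proposition~1]{Petrov2005} and used as a black box, so there is no in-paper argument to compare against.

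Your outline is the standard route and matches the argument in Petrov's original paper. Two remarks on the points you flagged as delicate. First, the splitting of $(v,a)\in\mathfrak L$ into Heisenberg summands each lying in $\mathfrak L$ is unproblematic: since $(e_i,0)\in\mathfrak L$ and $\mathfrak L$ is stable under the $R$-action, $(e_ir_i,0)=(e_i,0)\leftharpoonup r_i\in\mathfrak L$ for every $|i|\neq 1$, and then subtracting these off in the Heisenberg group leaves a residual pair $(u_0,a')\in\mathfrak L$ carrying the $V_0$-component, because $\mathfrak L$ is a subgroup. Second, your symmetry relation $T_{e_k,e_ls}(0)=T_{e_l,e_kt}(0)$ with $t=\bar 1^{-1}\bar s\,\bar 1^{-1}$ is correct and is exactly what is needed to rewrite $T_{\chi_{\pm 1}-\chi_j}(r)$ in the form $T_{e_{\pm 1},\,\cdot}(0)$. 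With these two points secured, the commutator bookkeeping you describe, while tedious, is routine and goes through for an arbitrary odd form parameter $\mathfrak L$, not only for $\mathfrak L_{\max}$.
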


Let $u=t_{1}v_{1}+ \ldots +t_{n}v_{n}+b_{1}e_{1}+b_{-1}e_{-1}+\ldots+b_{m}e_{m}+b_{-m}e_{-m}$ be an arbitrary element of $V$. For $i \in \{1, \ldots, m\}$, we have $\varepsilon_{-i}=-1$ and therefore $T_{i}(u,a)=T_{e_{i},-u}(-a)$. Also for $i \in \{-m, \ldots, -1\}$, we have $\varepsilon_{-i}=\bar{1}^{-1}$ and therefore $T_{i}(u,a)=T_{e_{i},u \bar{1}^{-1}}(-a)$. Thus for arbitrary $(v,a) \in \mathfrak L$ with $\langle e_1, v \rangle=\langle e_{-1}, v \rangle=0$ we have $T_{e_{1},v}(a)=T_1(-v,-a)$ and $T_{e_{-1},v}(a)=T_{-1}(v \bar{1},-a)$. Thus we can restate the above proposition as follows.

\begin{proposition}\label{B1}
	The group ${\rm EU}_{2l}(R, \mathfrak L)$ coincides with the group generated by all elements of the form $T_{\pm 1}(v,a)$, where $\langle e_1, v \rangle=\langle e_{-1}, v \rangle=0$ and $(v,a) \in \mathfrak L$.
\end{proposition}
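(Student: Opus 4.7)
The plan is to derive this statement as an immediate restatement of Petrov's Proposition \ref{B}, invoking the two identities
\[ T_{e_1, v}(a) = T_1(-v, -a) \quad \text{and} \quad T_{e_{-1}, v}(a) = T_{-1}(v\bar{1}, -a) \]
that are already derived in the paragraph preceding the statement by specializing the defining formula $T_i(u, r) = T_{e_i, u\varepsilon_{-i}}(-\bar{\varepsilon}_{-i}\bar{1}^{-1} r \varepsilon_{-i})$ to $i = \pm 1$. These identities establish an explicit correspondence between the two parameterizations of the root transvections associated with $\pm\chi_1$.

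The remaining work is to check that the correspondence is well-defined on the level of generators, i.e., that the admissibility conditions $(v, a) \in \mathfrak L$ and $\langle e_1, v \rangle = \langle e_{-1}, v \rangle = 0$ are preserved under the substitutions $v \mapsto -v$, $a \mapsto -a$ (for the $+1$ case) and $v \mapsto v\bar{1}$, $a \mapsto -a$ (for the $-1$ case). Orthogonality is clear by sesquilinearity of $\langle , \rangle$, since the substitutions amount to right scalar multiplication of $v$ by $-1$ or $\bar{1}$. For the membership condition I would apply the $R$-action $(v, a) \leftharpoonup s = (vs, \bar{s}\bar{1}^{-1} a s)$ with $s = -1$ and $s = \bar{1}$, and absorb any residual sign in the second coordinate via $\mathfrak L_{\min} \subseteq \mathfrak L$, using that $\mathfrak L$ is stable under this action by the very definition of an odd form parameter.

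Once this bijection of generating families is in place, the subgroups they generate coincide, so by Proposition \ref{B} this common subgroup is precisely ${\rm EU}_{2l}(R, \mathfrak L)$. The main subtle point is the pseudoinvolution bookkeeping for the $i = -1$ case: the simplification of the coefficient to $-r$ relies on the auxiliary identity $\overline{\bar{1}^{-1}}\bar{1}^{-1} = \bar{1}$, obtained by applying $\overline{r_1 r_2} = \bar{r}_2 \bar{1}^{-1} \bar{r}_1$ together with $\bar{\bar{r}} = r$ to the equation $\bar{1}\bar{1}^{-1} = 1$, and it is this identity that produces the twist $v \mapsto v\bar{1}$ rather than a simple sign change in the second translation formula; beyond that, the argument is a direct substitution in the commutative setting of Section 3.
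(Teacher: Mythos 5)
Your overall route is exactly the paper's: Proposition \ref{B1} is obtained there precisely by combining Petrov's Proposition \ref{B} with the substitution identities $T_{e_1,v}(a)=T_1(-v,-a)$ and $T_{e_{-1},v}(a)=T_{-1}(v\bar 1,-a)$ from the preceding paragraph, and your derivation of these identities (including the auxiliary relation $\overline{\bar 1^{-1}}\,\bar 1^{-1}=\bar 1$) is correct.

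The problem is the step you yourself isolate as ``the remaining work'': transporting the condition $(v,a)\in\mathfrak L$ to the new parameters via the $R$-action plus absorption of a ``residual sign'' into $\mathfrak L_{\min}$. Computing the action explicitly, for $s=-1$ one gets $(v,a)\leftharpoonup(-1)=\bigl(-v,\overline{(-1)}\,\bar 1^{-1}a(-1)\bigr)=(-v,a)$, and for $s=\bar 1$ one gets $(v\bar 1,a)$: in both cases the second coordinate is unchanged, whereas the parameters you need are $(-v,-a)$ and $(v\bar 1,-a)$. The discrepancy is $(0,-2a)$ in the Heisenberg group, and this is not absorbable: since $(v,a)\in\mathfrak L\subseteq\mathfrak L_{\max}$ gives $a-\bar a=\langle v,v\rangle$, one finds $tr\bigl((0,-2a)\bigr)=-2(a-\bar a)=-2\langle v,v\rangle$, which need not vanish, so $(0,-2a)$ need not even lie in $\mathfrak L_{\max}$, let alone in $\mathfrak L_{\min}$. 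In fact, with the signs as stated in the paper, $\mathfrak L$ is simply not closed under $(v,a)\mapsto(-v,-a)$ in general (take $\mathfrak L=\mathfrak L_{\max}$ and $\langle v,v\rangle$ not $2$-torsion), so no argument of the shape you propose can close this; the paper itself passes from Proposition \ref{B} to Proposition \ref{B1} by the identities alone and is silent on this bookkeeping. The difficulty disappears if the transvections are normalized so that the ESD parameter attached to $T_i(u,r)$ is exactly $(u,r)\leftharpoonup\varepsilon_{-i}$, with no extra minus sign in the second slot, as in Petrov's setup; then stability of $\mathfrak L$ under the $R$-action already suffices and no $\mathfrak L_{\min}$-correction is needed. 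As written, however, your membership verification does not go through.
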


\begin{note}
The pseudoinvolution we have taken satisfies the properties $\overline{r_{1}r_{2}}=\bar{r}_{2} \bar{1}^{-1} \bar{r}_{1}$ and $\bar{\bar{r}}=r$ for all $r,r_{1}, r_{2} \in R$. Therefore we have $\bar{1} \cdot \bar{1}=\overline{\bar{1}^{-1}}$.
\end{note}

Now we write the matrices for the generators $T_{\pm 1}(v,a)$, of the Petrov's odd elementary hyperbolic unitary group.

\vspace{3mm}
For computing $T_{1}(u,a)=T_{e_{1},-u}(-a)$, the element $u$ should satisfy $\langle e_{1},-u \rangle=0$ which implies that $b_{-1}=0$. Therefore $u$ has the form 
$$u=t_{1}v_{1}+ \ldots +t_{n}v_{n}+b_{1}e_{1}+b_{2}e_{2}+b_{-2}e_{-2}+\ldots+b_{m}e_{m}+b_{-m}e_{-m}.$$

We have $T_{1}(u,a)(w)=T_{e_{1},-u}(-a)(w)=w+e_{1} \bar{1}^{-1}(\langle -u,w \rangle-a\langle e_1,w \rangle)-u \langle e_1,w \rangle$. Thus we get
\begin{align*}
T_{1}(u,a)(v_k)&=v_k+e_1 \bar{1}^{-1} \langle -v_1 t_1 -\dots -v_n t_n, v_k\rangle \\
&=v_k-e_1 (\bar{1}^{-1})^2 ( \overline{t}_1 \langle v_1,v_k \rangle + \ldots +\overline{t}_n \langle v_n, v_k\rangle), \mbox{ for }  k \in \{1, \ldots n \}.
\end{align*}
Also $T_1(u,a)(e_1)=e_1$ and $$T_1(u,a)(e_{-1})=-t_{1}v_{1}- \ldots -t_{n}v_{n}-e_{1}(\overline{b_1}(\bar{1}^{-1})^2+a \bar{1}^{-1}+b_1)+e_{-1}-b_2 e_2- b_{-2} e_{-2}-\ldots-b_{m}e_{m}-b_{-m}e_{-m}.$$ 
For $k \in \{2, \ldots , m\}$, we get $T_1(u,a)(e_k)=e_k+e_1 \bar{1}^{-1}\overline{b_{-k}}$ and for $k \in \{-m, \ldots , -2\}$,  $T_1(u,a)(e_k)=e_k-e_1 (\bar{1}^{-1})^2 \overline{b_{-k}}$.

\vspace{3mm}

Therefore the matrix of $T_1(u,a)$ is of the form
$${\scriptsize
	\begin{pmatrix}
	1 & \ldots & 0 & 0 & -t_1 & 0 & 0 & \ldots & 0 & 0\\
	\vdots & \vdots & \vdots & \vdots & \vdots & \vdots & \vdots & \vdots & \vdots & \vdots\\
	0 & \ldots & 1 & 0 & -t_n & 0 & 0 & \ldots & 0 & 0\\
	-(\bar{1}^{-1})^2 (\overline{t}_1 \langle v_1,v_1 \rangle & \ldots & -(\bar{1}^{-1})^2 ( \overline{t}_1 \langle v_1,v_n \rangle & 1 & -(\overline{b}_1(\bar{1}^{-1})^2 & \bar{1}^{-1}\overline{b}_{-2} & -(\bar{1}^{-1})^2 \overline{b}_{2} & \ldots & \bar{1}^{-1}\overline{b}_{-m} & -(\bar{1}^{-1})^2 \overline{b}_{m}\\
	+ \ldots +\overline{t}_n \langle v_n, v_1\rangle) & & + \ldots +\overline{t}_n \langle v_n, v_n\rangle) & & \hspace{8mm}+a \bar{1}^{-1}+b_1) & & & & &\\
	0 & \ldots & 0 & 0 & 1 & 0 & 0 & \ldots & 0 & 0\\
	0 & \ldots & 0 & 0 & -b_2 & 1 & 0 & \ldots & 0 & 0\\
	0 & \ldots & 0 & 0 & -b_{-2} & 0 & 1 & \ldots & 0 & 0\\
	\vdots & \vdots & \vdots & \vdots & \vdots & \vdots & \vdots & \vdots & \vdots & \vdots\\
	0 & \ldots & 0 & 0 & -b_m & 0 & 0 & \ldots & 0 & 0\\
	0 & \ldots & 0 & 0 & -b_{-m} & 0 & 0 & \ldots & 0 & 1
	\end{pmatrix}}.$$

\vspace{2mm}

Now for computing $T_{-1}(u,a)=T_{e_{-1},u \bar{1}^{-1}}(-a)$, the element $u$ should satisfy $\langle e_{-1},u \bar{1}^{-1} \rangle=0$ which implies $b_{1}=0$. Therefore $u$ will be of the form 
\vspace{-2mm}
$$u=t_{1}v_{1}+ \ldots +t_{n}v_{n}+b_{-1}e_{-1}+b_{2}e_{2}+b_{-2}e_{-2}+\ldots+b_{m}e_{m}+b_{-m}e_{-m}.$$ 
We have $$T_{-1}(u,a)(w)=T_{e_{-1},u \bar{1}^{-1}}(-a)(w)=w+e_{-1} \bar{1}^{-1}(\langle u \bar{1}^{-1},w \rangle-a\langle e_{-1},w \rangle)+u \bar{1}^{-1} \langle e_{-1},w \rangle.$$ 
\begin{align*}
\mbox{Thus }~~T_{-1}(u,a)(v_k)&=v_k+e_{-1} \bar{1}^{-1} \langle (v_1 t_1 +\dots +v_n t_n)\bar{1}^{-1}, v_k\rangle \\
&=v_k+e_{-1} \bar{1}^{-1} ( \overline{t}_1 \langle v_1,v_k \rangle + \ldots +\overline{t}_n \langle v_n, v_k\rangle), \mbox{ for }  k \in \{1, \ldots n \},
\end{align*} 
\begin{align*}
 T_{-1}(u,a)(e_{1})=-t_{1}v_{1}&- \ldots -t_{n}v_{n}+e_{1}+e_{-1}(-\overline{b_{-1}}+a \bar{1}-b_{-1})-b_2 e_2- b_{-2} e_{-2}\\
 &-\ldots-b_{m}e_{m}-b_{-m}e_{-m}
\end{align*}
 and $T_{-1}(u,a)(e_{-1})=e_{-1}$.
 
 \vspace{3mm}
\noindent Also  $T_{-1}(u,a)(e_k)=\begin{cases}
                         e_k-e_{-1}\overline{b}_{-k}, \mbox{ for } k \in \{2, \ldots , m\},\\
                         e_k+e_{-1} \bar{1}^{-1}\overline{b}_{-k} \mbox{ for } k \in \{-m, \ldots , -2\}.
                        \end{cases}$
\vspace{3mm}

Therefore the matrix of $T_{-1}(u,a)$ is of the form
$${\scriptsize
	\begin{pmatrix}
	1 & \ldots & 0 & -t_1 & 0 & 0 & 0 & \ldots & 0 & 0\\
	\vdots & \vdots & \vdots & \vdots & \vdots & \vdots & \vdots & \vdots & \vdots & \vdots\\
	0 & \ldots & 1 & -t_n & 0 & 0 & 0 & \ldots & 0 & 0\\
	0 & \ldots & 0 & 1 & 0 & 0 & 0 & \ldots & 0 & 0\\
	\bar{1}^{-1} ( \overline{t}_1 \langle v_1,v_1 \rangle & \ldots & \bar{1}^{-1} ( \overline{t}_1 \langle v_1,v_n \rangle & -\overline{b}_{-1}+a-b_{-1} & 1 & -\overline{b}_{-2} & \bar{1}^{-1} \overline{b}_{2} & \ldots & -\overline{b}_{-m} & \bar{1}^{-1} \overline{b}_{m}\\
	+ \ldots +\overline{t}_n \langle v_n, v_1\rangle) & & + \ldots +\overline{t}_n \langle v_n, v_n\rangle) & & & & & \\
	0 & \ldots & 0 & -b_2 & 0 & 1 & 0 & \ldots & 0 & 0\\
	0 & \ldots & 0 & -b_{-2} & 0 & 0 & 1 & \ldots & 0 & 0\\
	\vdots & \vdots & \vdots & \vdots & \vdots & \vdots & \vdots & \vdots & \vdots & \vdots\\
	0 & \ldots & 0 & -b_m & 0 & 0 & 0 & \ldots & 0 & 0\\
	0 & \ldots & 0 & -b_{-m} & 0 & 0 & 0 & \ldots & 0 & 1
	\end{pmatrix}}.$$

	\vspace{3mm}
Now we consider the case when $R$ is a commutative ring and the involution is $a \mapsto \bar{a}=-a$. In this case, the above matrices reduces to the following form.

$${\scriptsize T_{1}(u,a)=\begin{pmatrix}
	1 & \ldots & 0 & 0 & -t_1 & 0 & 0 & \ldots & 0 & 0\\
	\vdots & \vdots & \vdots & \vdots & \vdots & \vdots & \vdots & \vdots & \vdots & \vdots\\
	0 & \ldots & 1 & 0 & -t_n & 0 & 0 & \ldots & 0 & 0\\
	t_1 \langle v_1,v_1 \rangle & \ldots & t_1 \langle v_1,v_n \rangle & 1 & a & b_{-2} & b_{2} & \ldots & b_{-m} & b_{m}\\
	+ \ldots +t_n \langle v_n, v_1\rangle & & + \ldots +t_n \langle v_n, v_n\rangle & & & & & & &\\
	0 & \ldots & 0 & 0 & 1 & 0 & 0 & \ldots & 0 & 0\\
	0 & \ldots & 0 & 0 & -b_2 & 1 & 0 & \ldots & 0 & 0\\
	0 & \ldots & 0 & 0 & -b_{-2} & 0 & 1 & \ldots & 0 & 0\\
	\vdots & \vdots & \vdots & \vdots & \vdots & \vdots & \vdots & \vdots & \vdots & \vdots\\
	0 & \ldots & 0 & 0 & -b_m & 0 & 0 & \ldots & 0 & 0\\
	0 & \ldots & 0 & 0 & -b_{-m} & 0 & 0 & \ldots & 0 & 1
	\end{pmatrix}}, \mbox{ and }$$

\vspace{3mm}

$$ {\scriptsize T_{-1}(u,a)=\begin{pmatrix}
	1 & \ldots & 0 & -t_1 & 0 & 0 & 0 & \ldots & 0 & 0\\
	\vdots & \vdots & \vdots & \vdots & \vdots & \vdots & \vdots & \vdots & \vdots & \vdots\\
	0 & \ldots & 1 & -t_n & 0 & 0 & 0 & \ldots & 0 & 0\\
	0 & \ldots & 0 & 1 & 0 & 0 & 0 & \ldots & 0 & 0\\
	t_1 \langle v_1,v_1 \rangle & \ldots & t_1 \langle v_1,v_n \rangle & a & 1 & b_{-2} & b_{2} & \ldots & b_{-m} & b_{m}\\
	+ \ldots + t_n \langle v_n, v_1\rangle& & + \ldots +t_n \langle v_n, v_n\rangle & & & & & & &\\
	0 & \ldots & 0 & -b_2 & 0 & 1 & 0 & \ldots & 0 & 0\\
	0 & \ldots & 0 & -b_{-2} & 0 & 0 & 1 & \ldots & 0 & 0\\
	\vdots & \vdots & \vdots & \vdots & \vdots & \vdots & \vdots & \vdots & \vdots & \vdots\\
	0 & \ldots & 0 & -b_m & 0 & 0 & 0 & \ldots & 0 & 0\\
	0 & \ldots & 0 & -b_{-m} & 0 & 0 & 0 & \ldots & 0 & 1
	\end{pmatrix}}.$$

\vspace{3mm}

\section{Visualization of odd elementary hyperbolic unitary group}

Let $\varphi$ be an invertible alternating matrix of size $2n$ of the form     
$\begin{pmatrix}
0 & -c\\
c^{t} & \mu
\end{pmatrix}$ and $\varphi^{-1}$ is of the form $\begin{pmatrix}
0 & d\\
-d^{t} & \rho
\end{pmatrix}$, where $c,d \in R^{2n-1}$. For $v \in R^{2n-1}$, Vaserstein (in \cite{VasersteinSuslin1976}, Lemma 5.4) considered the matrices 
$$\alpha =I_{2n-1}+d^t v \mu \mbox{ and } \beta = I_{2n-1} - \rho v^t c$$
and proved that the matrices 
$$ \begin{pmatrix}
1 & 0\\
v^{t} & \alpha
\end{pmatrix}
\mbox{ and } \begin{pmatrix}
1 & v\\
0 & \beta
\end{pmatrix}$$ 
belongs to ${\rm E}_{2n}(R) \cap {\rm Sp}_{\varphi}(R)$, where ${\rm Sp}_{\varphi}(R)$ is the group of all symplectic matrices with respect to the form $\varphi$. 

\vspace{2mm}
Here we define analogous matrices for Petrov's odd elementary hyperbolic unitary group. The form matrix for Petrov's group is $\varPsi=\widetilde{\psi}_{m} \perp \varphi$, where $\varphi$ is the form matrix for $V_0$ and $\widetilde{\psi}_{r}$ is defined as follows.

$$\widetilde{\psi_{1}}=\begin{pmatrix}
0 & 1\\
-\bar{1} & 0
\end{pmatrix}, 
\mbox{ and } \widetilde{\psi}_{r}=\widetilde{\psi}_{1} \perp \widetilde{\psi}_{r-1} \mbox{ for } r >1.$$ 
Then $\varPsi$ can be written in the form 
$\begin{pmatrix}
0 & c\\
-\bar{1} c^{t} & \mu
\end{pmatrix}$, where 
$c=\begin{pmatrix}
1 & \ldots & 0
\end{pmatrix} \in R^{n+2m-1}$
and $\mu$ is the $(n+2m) \times (n+2m)$ matrix given by $\mu=\begin{pmatrix}
0 & 0 & 0\\
0 & \widetilde{\psi}_{m-2} & 0\\
0 & 0 & \varphi
\end{pmatrix}$. 
Also we have $\varPsi^{-1}$ is of the form
$\begin{pmatrix}
0 & d\\
-\bar{1} d^{t} & \rho
\end{pmatrix}$, where 
$d=\begin{pmatrix}
-\bar{1}^{-1} & \ldots & 0
\end{pmatrix} \in R^{n+2m-1}$ and 
$\rho=\begin{pmatrix}
0 & 0 & 0\\
0 & \widetilde{\psi}'_{m-2} & 0\\
0 & 0 & \varphi^{-1}
\end{pmatrix}$, where $\widetilde{\psi}'_{m}$ is defined as follows.

$$\widetilde{\psi}'_{1}=\begin{pmatrix}
0 & -\bar{1}^{-1}\\
1 & 0
\end{pmatrix}
\mbox{ and } \widetilde{\psi}'_{r}=\widetilde{\psi}'_{1} \perp \widetilde{\psi}'_{r-1}, \mbox{  for  } r >1.$$   

For $v=(a_1, \ldots, a_{n+2m-1}) \in R^{n+2m-1}$, define the Vaserstein type matrices $L(v)$ and $L(v)^{*}$ as 
\vspace{-2mm}      
$$L(v)=\begin{pmatrix}
1 & 0\\
v^{t} & \alpha
\end{pmatrix}
\mbox{ and } L(v)^{*}=\begin{pmatrix}
1 & v\\
0 & \beta
\end{pmatrix},$$       
\vspace{-1mm}       
where $\alpha =I_{n+2m-1}+d^t \overline{v} \mu$ and $\beta = I_{n+2m-1} - \bar{1}^{-1} \rho \overline{v}^t c$.

\vspace{3mm}

By direct computations, we get the matrices $\alpha$ and $\beta$ corresponding to the form $\Psi$ as follows:
$$\alpha = {\scriptsize{\begin{pmatrix}
		1 & \overline{a}_3 & -\bar{1}^{-1} \overline{a}_2 & \ldots & \overline{a}_{2m-1} & -\bar{1}^{-1} \overline{a}_{2m-2} & -\bar{1}^{-1}(\overline{a}_{2m}\varphi_{11}+\overline{a}_{2m+1} \varphi_{21} & \ldots & -\bar{1}^{-1}(\overline{a}_{2m}\varphi_{1n}+\overline{a}_{2m+1} \varphi_{2n}\\
		& & & & & & +\ldots+\overline{a}_{2m-1+n} \varphi_{n1}) & & +\ldots+\overline{a}_{2m-1+n} \varphi_{nn})\\ 
		0 & 1 & 0 & \ldots & 0 & 0 & 0 & \ldots & 0\\
		0 & 0 & 1 & \ldots & 0 & 0 & 0 & \ldots & 0\\
		\vdots & \vdots & \vdots & \vdots & \vdots & \vdots & \vdots & \vdots & \vdots\\
		0 & 0 & 0 & \ldots & 0 & 0 & 0 & \ldots & 1
		\end{pmatrix}}},$$

$$\mbox{ and } \beta = \scriptsize{\begin{pmatrix}
	1 & 0 & 0 & \ldots & 0\\
	(\bar{1}^{-1})^2 \overline{a}_3 & 1 & 0 & \ldots & 0\\
	-\bar{1}^{-1} \overline{a}_2 & 0 & 1 & \ldots & 0\\
	\vdots & \vdots & \vdots & \vdots & \vdots\\
	(\bar{1}^{-1})^2 \overline{a}_{2m-1} & 0 & 0 & \ldots & 0\\
	-\bar{1}^{-1} \overline{a}_{2m-2} & 0 & 0 & \ldots & 0\\
	-\bar{1}^{-1} (\overline{a}_{2m}(\varphi^{-1})_{11}+\overline{a}_{2m+1} (\varphi^{-1})_{12}+\ldots+\overline{a}_{2m-1+n} (\varphi^{-1})_{1n}) & 0 & 0 & \ldots & 0\\
	\vdots & \vdots & \vdots & \vdots & \vdots \\
	-\bar{1}^{-1} (\overline{a}_{2m}(\varphi^{-1})_{n1}+\overline{a}_{2m+1} (\varphi^{-1})_{n2}+\ldots+\overline{a}_{2m-1+n} (\varphi^{-1})_{nn} & 0 & 0 & \ldots & 1
	\end{pmatrix}}.$$

Therefore, we have 
$$\hspace{-8mm}L(v)= {\scriptsize{\begin{pmatrix}
		1 & 0 & 0 & 0 & \ldots & 0 & 0 & 0 & \ldots & 0\\
		a_1 & 1 & \overline{a}_3 & -\bar{1}^{-1} \overline{a}_2 & \ldots &  \overline{a}_{2m-1} & -\bar{1}^{-1} \overline{a}_{2m-2} & -\bar{1}^{-1}(\overline{a}_{2m}\varphi_{11}+\overline{a}_{2m+1} \varphi_{21} & \ldots & -\bar{1}^{-1}(\overline{a}_{2m}\varphi_{1n}+\overline{a}_{2m+1} \varphi_{2n}\\
		& & & & & & & +\ldots+\overline{a}_{2m-1+n} \varphi_{n1}) & & +\ldots+\overline{a}_{2m-1+n} \varphi_{nn})\\ 
		a_2 & 0 & 1 & 0 & \ldots & 0 & 0 & 0 & \ldots & 0\\
		a_3 & 0 & 0 & 1 & \ldots & 0 & 0 & 0 & \ldots & 0\\
		\vdots & \vdots & \vdots & \vdots & \vdots & \vdots & \vdots & \vdots & \vdots & \vdots\\
		a_{n+2m-1} & 0 & 0 & 0 & \ldots & 0 & 0 & 0 & \ldots & 1
		\end{pmatrix}}},$$

$$\mbox{ and }L(v)^{*}={\scriptsize{\begin{pmatrix}
		1 & a_1 & a_2 & 0 & \ldots & a_{n+2m-1}\\
		0 & 1 & 0 & 0 & \ldots & 0\\
		0 & (\bar{1}^{-1})^2 \overline{a}_3 & 1 & 0 & \ldots & 0\\
		0 & -\bar{1}^{-1} \overline{a}_2 & 0 & 1 & \ldots & 0\\
		\vdots & \vdots & \vdots & \vdots & \vdots & \vdots\\
		0 & (\bar{1}^{-1})^2 \overline{a}_{2m-1} & 0 & 0 & \ldots & 0\\
		0 & -\bar{1}^{-1} \overline{a}_{2m-2} & 0 & 0 & \ldots & 0\\
		0 & -\bar{1}^{-1} (\overline{a}_{2m}(\varphi^{-1})_{11}+\overline{a}_{2m+1} (\varphi^{-1})_{12}+\ldots+\overline{a}_{2m-1+n} (\varphi^{-1})_{1n}) & 0 & 0 & \ldots & 0\\
		\vdots & \vdots & \vdots & \vdots & \vdots & \vdots \\
		0 & -\bar{1}^{-1} (\overline{a}_{2m}(\varphi^{-1})_{n1}+\overline{a}_{2m+1} (\varphi^{-1})_{n2}+\ldots+\overline{a}_{2m-1+n} (\varphi^{-1})_{nn} & 0 & 0 & \ldots & 1
		\end{pmatrix}}}.$$

\vspace{3mm}

By an argument analogous to Lemma 5.4 in \cite{VasersteinSuslin1976}, we can prove that both $L(v)$ and $L(v)^{*}$ are elementary matrices. 

\begin{theorem}
	For $v=(a_1, \ldots, a_{n+2m-1}) \in R^{n+2m-1}$, we have
	$L(v),~L(v)^{*} \in {\rm E}_{n+2m}(R).$
\end{theorem}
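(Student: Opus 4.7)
The plan is to factor each of $L(v)$ and $L(v)^{*}$ as a product of two block-triangular matrices, each of which is manifestly a product of elementary transvections. The structural observation that makes this work---the analogue of the key step in Vaserstein--Suslin's Lemma~5.4---is that both $c=(1,0,\ldots,0)$ and $d=(-\bar{1}^{-1},0,\ldots,0)$ have support in a single coordinate. Consequently $d^{t}\bar{v}$ is a rank-one matrix supported in its first row, hence so is $d^{t}\bar{v}\mu$; similarly $\bar{v}^{t}c$ is supported in its first column, hence so is $\rho\bar{v}^{t}c$. Moreover, by the displayed block form of $\mu$ and $\rho$, the first column of $\mu$ and the first row of $\rho$ both vanish, which forces $\alpha_{1,1}=1$ and $\beta_{1,1}=1$. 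Thus $\alpha-I_{n+2m-1}$ is concentrated in the positions $(1,j)$ for $j\ge 2$, and $\beta-I_{n+2m-1}$ in the positions $(i,1)$ for $i\ge 2$.

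With these shapes in hand, I would use the block decompositions
\begin{equation*}
L(v)=\begin{pmatrix} 1 & 0 \\ v^{t} & I_{n+2m-1}\end{pmatrix}\begin{pmatrix} 1 & 0 \\ 0 & \alpha\end{pmatrix},\qquad L(v)^{*}=\begin{pmatrix} 1 & v \\ 0 & I_{n+2m-1}\end{pmatrix}\begin{pmatrix} 1 & 0 \\ 0 & \beta\end{pmatrix}.
\end{equation*}
A direct block multiplication confirms these identities; the key point is that the first row of $\left(\begin{smallmatrix} 1 & 0 \\ 0 & \alpha\end{smallmatrix}\right)$ is $(1,0,\ldots,0)$ and the first column of $\left(\begin{smallmatrix} 1 & 0 \\ 0 & \beta\end{smallmatrix}\right)$ is $(1,0,\ldots,0)^{t}$, so the non-identity blocks in the two factors cannot interact. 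The first factor in each identity is $\prod_{i=1}^{n+2m-1}E_{i+1,1}(a_{i})$, respectively $\prod_{i=1}^{n+2m-1}E_{1,i+1}(a_{i})$, both of which are visibly elements of $\mathrm{E}_{n+2m}(R)$. By the row/column support of $\alpha-I$ and $\beta-I$, the second factor is respectively $\prod_{j=3}^{n+2m}E_{2,j}(\alpha_{1,j-1})$ and $\prod_{i=3}^{n+2m}E_{i,2}(\beta_{i-1,1})$---each a commuting product of elementaries sharing a common row or column index---and again lies in $\mathrm{E}_{n+2m}(R)$.

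No substantive obstacle is anticipated. The argument reduces to a block-matrix identity combined with the elementary fact that $I$ plus a row-$k$-supported (respectively column-$k$-supported) correction factors as $\prod_{j\ne k}E_{k,j}(\cdot)$ (respectively $\prod_{i\ne k}E_{i,k}(\cdot)$). The factors of $\bar{1}^{-1}$ and the pseudoinvolution $\sigma$ that appear inside the explicit entries of $\alpha$ and $\beta$ enter only as the scalar parameters of the elementary transvections; they do not affect the support pattern driving the decomposition, and hence play no role beyond bookkeeping.
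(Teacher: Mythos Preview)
Your proof is correct, and both you and the paper use the same block factorization
\[
L(v)=\begin{pmatrix}1&0\\v^{t}&I\end{pmatrix}\begin{pmatrix}1&0\\0&\alpha\end{pmatrix}
\]
(and the analogous one for $L(v)^{*}$), so the overall strategy coincides. The difference lies in how the second factor is shown to lie in $\mathrm{E}_{n+2m}(R)$: the paper first checks that $\bar v\mu d^{t}=0$ and then invokes Lemma~2.2 of Suslin--Vaserstein, the general principle that if $1+ab$ is a unit then $I+ba$ is invertible with $\left(\begin{smallmatrix}1&0\\0&I+ba\end{smallmatrix}\right)$ elementary; you instead observe directly that $\alpha-I$ is supported in a single off-diagonal row (since $d$ is a coordinate vector and the first column of $\mu$ vanishes), so $\alpha$ is visibly a product of row-elementaries $E_{1,j}$. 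Your route is more self-contained and tailored to the very explicit shape of $c$, $d$, $\mu$, $\rho$ in this setting; the paper's route situates the result within the general Vaserstein--Suslin machinery, which would still apply even if $d$ were not a coordinate vector.
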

\begin{proof}
	By direct computation, we can verify that $\bar{v}\mu d^t =0$. Therefore $1+\bar{v}\mu d^t =1$. By \cite[Lemma~ 2.2]{VasersteinSuslin1976}, we have $\alpha=1+d^t \bar{v}\mu \in {\rm GL}_{n+2m}(R)$ and
	$\begin{pmatrix}
	1 & 0\\
	0 & \alpha
	\end{pmatrix} \in {\rm E}_{n+2m}(R)$. Now by using \cite[Lemma ~2.1]{VasersteinSuslin1976}, we get 
	$\begin{pmatrix}
	1 & 0\\
	v^t & I_{n+2m-1}
	\end{pmatrix} \in {\rm E}_{n+2m}(R)$. Therefore we have
	$$\begin{pmatrix}
	1 & 0\\
	v^t & \alpha
	\end{pmatrix}
	=\begin{pmatrix}
	1 & 0\\
	v^t & I_{n+2m-1}
	\end{pmatrix}
	\begin{pmatrix}
	1 & 0\\
	0 & \alpha
	\end{pmatrix}\in {\rm E}_{n+2m}(R).$$
\end{proof}
The matrices $L(v)$ and $L(v)^{*}$ are defined for arbitrary $v=(a_1, \ldots, a_{n+2m-1}) \in R^{n+2m-1}$. Under some condition on the components of $v$, we get the following result.
\begin{theorem}\label{C}
	We have $L(v) \in {\rm U}_{2m}(R, \mathfrak L_{\max})$, if $v=(a_1, \ldots, a_{n+2m-1}) \in R^{n+2m-1}$ satisfies the condition 
	\begin{equation}\label{D}
	\overline{a}_1-a_1=\bar{1}^{-1} \overline{(a_2, \ldots, a_{n+2m-1})}\varPsi (a_2, \ldots, a_{n+2m-1})^t = q(a_2, \ldots, a_{n+2m-1}).
	\end{equation}
	Also, $L(v)^{*}$ preserves the form matrix $\varPsi$, for $v$ satisfying the condition
	\begin{equation}\label{E}
	\overline{\bar{1}a_1}-\bar{1}a_1=\overline{(a_2, \ldots, a_{n+2m-1})}(\widetilde{\psi}'_{m-1} \perp (\bar{1}^{-1})^2 \varphi)^t (a_2, \ldots, a_{n+2m-1})^t.
	\end{equation}
\end{theorem}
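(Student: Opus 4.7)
The plan is to verify the two defining conditions for $L(v) \in {\rm U}_{2m}(R,\mathfrak L_{\max})$ separately: first that $L(v)$ is an isometry with respect to $\varPsi$, and second that $L(v) \cong e \pmod{\mathfrak L_{\max}}$. For $L(v)^*$, only the isometry condition is claimed, and this will be handled by an analogous block computation.

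First I would establish isometry by computing the block product
\begin{equation*}
\overline{L(v)}^{\,t}\,\varPsi\,L(v)
=\begin{pmatrix} 1 & \overline{v}\\ 0 & \overline{\alpha}^{\,t}\end{pmatrix}
\begin{pmatrix} 0 & c\\ -\bar 1\, c^{t} & \mu\end{pmatrix}
\begin{pmatrix} 1 & 0\\ v^{t} & \alpha\end{pmatrix}
\end{equation*}
and showing it equals $\varPsi$. Expanding gives four blocks. The $(2,2)$ block reduces to $\overline{\alpha}^{\,t}\mu\alpha$; substituting $\alpha = I+d^{t}\overline{v}\mu$, using the easy identity $\overline{v}\mu d^{t}=0$ already noted in the excerpt, and the defining relations $\mu d^{t} = -\bar 1^{-1} c^{t}\cdot(\text{column vector})$ coming from $\varPsi\varPsi^{-1}=I$, this collapses to $\mu$. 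The $(1,2)$ and $(2,1)$ blocks collapse to $c$ and $-\bar 1 c^{t}$ by the same cancellations together with $cd^{t}=-\bar 1^{-1}$. The only nontrivial block is $(1,1)$, where the computation yields the scalar
\begin{equation*}
-\bar 1\,\overline{v}c^{t}+cv^{t}+\overline{v}\mu v^{t}.
\end{equation*}
After identifying $cv^{t}=a_{1}$, $\overline{v}c^{t}=\overline{a}_{1}$, and recognising $\bar 1^{-1}\overline{v}\varPsi v^{t}$ with $\overline{v}\mu v^{t}$ up to $\bar 1$-factors, this vanishing is exactly condition (\ref{D}).

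Next, for the congruence $L(v)\cong e\pmod{\mathfrak L_{\max}}$, I note that $L(v)-I$ is supported in the first column, so the equivalence relation has to be checked on the single basis vector $e_{1}$. The condition $(L(v)(e_{1})-e_{1},\langle e_{1},e_{1}-L(v)(e_{1})\rangle)\in\mathfrak L_{\max}$ amounts to the vanishing of a single trace expression, which unpacks to exactly the same bilinear identity as Step~1, and is therefore again a consequence of (\ref{D}). For $L(v)^{*}$ the argument is entirely parallel: form $\overline{L(v)^{*}}^{\,t}\varPsi L(v)^{*}$, use $\beta = I-\bar 1^{-1}\rho\overline{v}^{\,t}c$ together with the companion identity $c\rho=0$ (dually to $\overline{v}\mu d^{t}=0$), reduce three of the four blocks mechanically, and show that the remaining scalar block is precisely hypothesis~(\ref{E}) after bookkeeping of $\bar 1$-factors coming from $\rho = 0\perp\widetilde{\psi}'_{m-2}\perp\varphi^{-1}$ and from $\bar 1 c^{t}$.

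The main obstacle is not any single block equation but the careful bookkeeping of the pseudoinvolution, which satisfies $\overline{r_{1}r_{2}} = \bar r_{2}\bar 1^{-1}\bar r_{1}$ rather than the usual contravariant rule; this is what forces the $\bar 1^{\pm 1}$ factors in the definitions of $\alpha$, $\beta$, $q$, $\widetilde{\psi}_{m}$ and $\widetilde{\psi}'_{m}$ to appear asymmetrically. Keeping these factors consistent (and using $\overline{\bar r}=r$ and $\bar 1\cdot\bar 1=\overline{\bar 1^{-1}}$ from the \textbf{Note}) is what distinguishes the present computation from the classical symplectic case in \cite{VasersteinSuslin1976}, and is why condition~(\ref{D}) is stated with an overall factor of $\bar 1^{-1}$ while condition~(\ref{E}) is stated with $\bar 1 a_{1}$ on the left and $(\bar 1^{-1})^{2}\varphi$ on the right. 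Once these factors are tracked correctly, the block cancellations are routine and both claims follow.
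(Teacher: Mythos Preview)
Your isometry argument for $L(v)$ via the block product $\overline{L(v)}^{\,t}\varPsi L(v)$ is essentially the ``direct calculation'' the paper invokes (up to the global factor $\bar 1^{-1}$ that you dropped from the isometry condition $\bar 1^{-1}\overline{L(v)}^{t}\varPsi L(v)=\varPsi$), and the parallel treatment of $L(v)^{*}$ is likewise fine.

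The congruence step has a genuine gap. Your claim that ``$L(v)-I$ is supported in the first column'' is false: since $\alpha=I+d^{t}\bar v\mu$ with $d^{t}=(-\bar 1^{-1},0,\dots,0)^{t}$, the matrix $\alpha-I$ has its entire first \emph{row} nonzero, so $L(v)-I$ is supported on the first column \emph{and} the second row (look at the explicit matrix of $L(v)$ in the paper: row~2 has entries $\overline a_{3},-\bar 1^{-1}\overline a_{2},\dots$ in columns $3$ onward). Hence $L(v)(w)-w\neq 0$ for generic $w$ with $w_{1}=0$, and your reduction to the single basis vector $e_{1}$ is not justified. Moreover, even if the support claim were true, the congruence condition $(L(v)(w)-w,\langle w-L(v)(w),w\rangle)\in\mathfrak L_{\max}$ is quadratic in $w$, so checking it on basis vectors does not automatically give it for all $w$.

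The paper instead computes $u=L(v)(w)-w$ and $a=\langle w-L(v)(w),w\rangle$ for an \emph{arbitrary} $w=(w_{1},\dots,w_{n+2m})$, obtains an explicit expression of $a$ as $\bar 1^{-1}\overline w_{1}\overline a_{1}w_{1}$ plus a sum of terms of the form $s_{k}+\overline s_{k}$ (which cancel in $a-\overline a$), and then verifies directly that $a-\overline a=\bar 1^{-1}\overline w_{1}(\overline a_{1}-a_{1})w_{1}=\bar 1^{-1}\overline w_{1}\,q(a_{2},\dots,a_{n+2m-1})\,w_{1}=\langle u,u\rangle$, using condition~(\ref{D}). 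This is what you need to supply in place of the first-column claim; alternatively you could argue separately that the case $w_{1}=0$ gives $u$ proportional to $e_{-1}$ with $a=0$ (hence trivially in $\mathfrak L_{\max}$), but you would still need to bridge from there to general $w$, which is not automatic.
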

\begin{proof}
	By direct calculation we can verify that under the condition given in Equation (\ref{D}), $$\bar{1}^{-1} \overline{L(v)}^{t} \varPsi L(v) = \varPsi.$$ 
	That is, $L(v)$ preserves the form matrix for Petrov's group. Now to prove that $L(v)$ belongs to ${\rm U}_{2m}(R, \mathfrak L_{\max})$, it suffices to prove that $L(v) \cong I_{n+2m}$ (mod $\mathfrak{L}_{\max}$).
	That is $(L(v)(w)-w, \langle w-L(v)(w),w \rangle) \in \mathfrak{L}_{\max}$, for all $w \in V$.
	
	\vspace{2mm}
	For $w=(w_1, \ldots, w_{n+2m}) \in V$, we have 
	$$L(v)(w)=\begin{pmatrix}
	w_1\\
	x\\
	a_2 w_1+w_3\\
	a_3 w_1+w_4\\
	\vdots \\
	a_{n+2m-1}w_1+w_{n+2m}
	\end{pmatrix},
	$$ 
	\begin{align*}
	\mbox{where }x&=a_1 w_1+w_2+\overline{a}_{3}w_3-\bar{1}^{-1}\overline{a}_2 w_4+ \ldots +\overline{a}_{2m-1} w_{2m-1}\\
	&\hspace{1cm}-\bar{1}^{-1}\overline{a}_{2m-2}w_{2m} -\bar{1}^{-1}(\overline{a}_{2m}\varphi_{11}+\overline{a}_{2m+1} \varphi_{21}+\ldots+\overline{a}_{2m-1+n} \varphi_{n1}) w_{2m+1}\\
	&\hspace{2cm}- \ldots -\bar{1}^{-1}(\overline{a}_{2m}\varphi_{1n}+\overline{a}_{2m+1} \varphi_{2n}+\ldots+\overline{a}_{2m-1+n} \varphi_{nn})w_{n+2m}.
	\end{align*}
	Let $u=L(v)(w)-w$. Then $$u= \begin{pmatrix}
	0\\
	x_1\\
	a_2 w_1\\
	a_3 w_1\\
	\vdots \\
	a_{n+2m-1}w_1
	\end{pmatrix},
	$$ where $x_1=x-w_2.$ Now, let $a=\langle w-L(v)(w),w \rangle.$ By computations we can see that 
	\begin{align*}
	a&=\bar{1}^{-1}\overline{w}_1 \overline{a}_1 w_1 + (s_1+\overline{s}_1)- (s_2+\overline{s}_2)+ \ldots + (s_{2m-3}+ \overline{s}_{2m-3})- (s_{2m-2}+\overline{s}_{2m-2})\\
	&\hspace{2cm}- (s_{2m-1}+\overline{s}_{2m-1})- (s_{2m}+\overline{s}_{2m})- \ldots -(s_{2m+n}+\overline{s}_{2m+n}),
	\end{align*}
	where $s_k=
	\begin{cases}
	\bar{1}^{-1}\overline{w}_{k+2} a_{2k-1} w_1 , \mbox{ for } 1 \leq k \leq 2m-2 \mbox{ and $k$ is odd},\\
	\overline{w}_{k+2} a_{k} w_1, \mbox{ for } 1 \leq k \leq 2m-2 \mbox{ and $k$ is even},\\
	\bar{1}^{-1}\overline{w}_{k+2} (\overline{\varphi}_{1,k-(2m-2)} a_{2m}+\overline{\varphi}_{2,k-(2m-2)} a_{2m+1}+ \ldots +\overline{\varphi}_{n,k-(2m-2)} a_{n+2m-1})w_1, \\
	\hspace{6cm}\mbox{ for } 2m-1 \leq k \leq 2m+n-2.
	\end{cases}$
	
	\vspace{3mm}
	Thus $a-\overline{a}=\bar{1}^{-1}\overline{w}_1 (\overline{a}_1-a_1) w_1.$ Also we can verify that 
	$$\langle u,u \rangle=\bar{1}^{-1}\overline{w}_1 q(a_2,\ldots,a_{n+2m-1}) w_1=\bar{1}^{-1}\overline{w}_1 (\overline{a}_1-a_1) w_1.$$ Therefore $a-\overline{a}=\langle u,u \rangle$, which implies $(u,a) \in \mathfrak L_{\max}$. Thus $L(v) \in {\rm U}_{2m}(R, \mathfrak L_{\max}).$
	
	\vspace{2mm}
	Now assume that $v$ satisfies the condition given in Equation (\ref{E}). Then we can see that
	$$\bar{1}^{-1} \overline{L(v)^{*}}^{t} \varPsi L(v)^{*}=\varPsi,$$ which shows that $L(v)^{*}$ preserves the form matrix $\varPsi.$
\end{proof}

\begin{remark}\label{F}
	By assuming the condition given in Equation (\ref{E}) and doing similar calculations as in the proof of Theorem \ref{C}, for $u_1=L(v)^{*}(w)-w$ and $b=\langle w-L(v)^{*}(w),w \rangle$, we get $$\langle u_1,u_1 \rangle=\overline{w}_2 (a_1 - (\bar{1}^{-1})^{2}\overline{a}_1) w_2  ~~\mbox{ and } ~~
	b-\bar{b}= \overline{w}_2 (a_1 - (\bar{1}^{-1})^{2}\overline{a}_1) w_2+(1-(\bar{1}^{-1})^2)(s+\overline{s}),$$
	where $s=\overline{w}_2 \begin{pmatrix}
	a_{2m} & a_{2m+1} & \ldots & a_{n+2m-1}
	\end{pmatrix} 
	\begin{pmatrix}
	w_{2m+1}\\
	w_{2m+2}\\
	\vdots \\
	w_{n+2m}
	\end{pmatrix}$.
	
	\vspace{3mm}
	Thus $(u_1,b) \notin \mathfrak L_{\max}$ and hence $L(v)^{*} \notin {\rm U}_{2m}(R, \mathfrak L_{\max})$ in general. In the special case in which the involution satisfies $(\bar{1}^{-1})^2=1$, we have both $L(v)$ and $L(v)^{*}$ belongs to ${\rm U}_{2m}(R, \mathfrak L_{\max})$.
	
\end{remark}

Next, we shall consider the group generated by these two type of matrices and we have the following theorem.
\begin{theorem}\label{A}
	Let $R$ be a commutative ring with unity. The group generated by $L(v)$ and $L(v)^{*}$ for $v \in R^{2m+n-1}$ is congruent to the Petrov's odd elementary hyperbolic unitary group ${\rm EU}_{2m}(R,\mathfrak L_{\max})$.
\end{theorem}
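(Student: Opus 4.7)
The plan is to exhibit an explicit $g \in {\rm GL}_{n+2m}(R)$ such that conjugation by $g$ carries the group $G := \langle L(v), L(v)^{*} : v \in R^{n+2m-1} \rangle$ onto ${\rm EU}_{2m}(R, \mathfrak L_{\max})$. By Proposition \ref{B1}, ${\rm EU}_{2m}(R, \mathfrak L_{\max})$ is generated by the $T_{\pm 1}(u, a)$, so it suffices to establish, for this fixed $g$, bijective correspondences $T_{1}(u, a) \longleftrightarrow L(v)$ and $T_{-1}(u, a) \longleftrightarrow L(v)^{*}$ at the level of generators.

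First I would pin down $g$ by comparing the matrix shapes in Section 3 with those of $L(v)$ and $L(v)^{*}$. Each of $T_{\pm 1}(u, a)$, $L(v)$, $L(v)^{*}$ is the identity away from a single distinguished row and column that meet on the diagonal; for $T_{\pm 1}$ the distinguished indices are the two positions corresponding to the hyperbolic pair $e_{1}, e_{-1}$ (positions $n{+}1$ and $n{+}2$ in the basis ordering $\{v_1, \dots, v_n, e_1, e_{-1}, \dots \}$ used in Section 3), while for $L(v)$ and $L(v)^{*}$ it is the first coordinate (forced by writing $\varPsi = \widetilde{\psi}_m \perp \varphi$). Thus $g$ should be the permutation (with appropriate $\bar{1}^{\pm 1}$ scaling on the relevant entry) that moves coordinate $1$ to the first hyperbolic slot and rearranges the remaining coordinates so that $g^{t} \varPsi g$ equals the Gram matrix of the Section 3 basis.

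Second, I would read off the correspondence entrywise. Given $(u, a) \in \mathfrak L_{\max}$ with $\langle e_{1}, u \rangle = \langle e_{-1}, u \rangle = 0$, extract the coefficients of $u$ in the Section 3 basis to produce a vector $v(u,a) \in R^{n+2m-1}$, then verify by direct matrix comparison that $g T_{1}(u, a) g^{-1} = L(v(u,a))$, and that $v(u,a)$ satisfies condition (\ref{D}). Symmetrically, $g T_{-1}(u, a) g^{-1} = L(v'(u,a))^{*}$ with $v'(u,a)$ satisfying (\ref{E}). Conversely, given $v$ satisfying (\ref{D}), one reconstructs $(u, a)$ from its entries and checks $(u, a) \in \mathfrak L_{\max}$; this last check is essentially the content of the computation already carried out in the proof of Theorem \ref{C}, and it yields $g^{-1} L(v) g = T_{1}(u, a) \in {\rm EU}_{2m}(R, \mathfrak L_{\max})$. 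The analogous direction for $L(v)^{*}$ uses commutativity of $R$, which forces $\bar{1} = -1$ and $(\bar{1}^{-1})^{2} = 1$; by Remark \ref{F} this places $L(v)^{*}$ in ${\rm U}_{2m}(R, \mathfrak L_{\max})$, and the congruence modulo $\mathfrak L_{\max}$ needed to lie in the unitary group follows by a calculation parallel to the one in Theorem \ref{C}.

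The main obstacle is the bookkeeping in the second step: carefully matching $\bar{1}^{-1}$ factors, sign conventions, and positions of the entries $b_{\pm i}, t_j, a$ in the matrices of $T_{\pm 1}(u, a)$ with the entries $a_{i}, \varphi_{ij}$ appearing in $\alpha, \beta$, and confirming that the membership condition $(u, a) \in \mathfrak L_{\max}$ translates precisely into the algebraic identities (\ref{D}) or (\ref{E}) on the image vector. A secondary delicate point is that, without the commutativity hypothesis, $L(v)^{*}$ generally only preserves $\varPsi$ and fails to lie in ${\rm U}_{2m}(R, \mathfrak L_{\max})$ (Remark \ref{F}); the commutativity hypothesis in the theorem is exactly what disposes of this issue. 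Once these two points are settled, the two inclusions $g G g^{-1} \subseteq {\rm EU}_{2m}(R, \mathfrak L_{\max})$ and $g G g^{-1} \supseteq {\rm EU}_{2m}(R, \mathfrak L_{\max})$ follow immediately from the generator-level correspondence, yielding the desired congruence.
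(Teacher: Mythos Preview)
Your overall strategy---exhibit a basis-change matrix and match generators on both sides via Proposition \ref{B1}---is precisely the paper's strategy, and the conjugating matrix is indeed a pure permutation, namely $P=\begin{pmatrix}0&I_{2m}\\I_{n}&0\end{pmatrix}$ (no $\bar{1}^{\pm1}$ scalings are needed). However, your treatment of the $L(v)^{*}$ side contains a genuine error. You assert that commutativity of $R$ forces $\bar{1}=-1$, hence $(\bar{1}^{-1})^{2}=1$, so that Remark \ref{F} applies and $L(v)^{*}\in{\rm U}_{2m}(R,\mathfrak L_{\max})$. This is false: the identity map on any commutative ring is a pseudoinvolution with $\bar{1}=1$, and on $\mathbb{C}$ the map $r\mapsto i\,\bar r$ (bar denoting complex conjugation) is a pseudoinvolution with $\bar{1}=i$ and $(\bar{1}^{-1})^{2}=-1\neq1$. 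Thus Remark \ref{F} cannot be invoked in general, and your argument for the $L(v)^{*}$ direction collapses. There is also a secondary gap: even if you could place $L(v)^{*}$ in ${\rm U}_{2m}(R,\mathfrak L_{\max})$, you would still need its conjugate to lie in the \emph{elementary} subgroup ${\rm EU}_{2m}$, which does not follow automatically.

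The paper avoids both issues by never passing through Theorem \ref{C} or Remark \ref{F}. It computes $P^{t}L(v)P$ and $P^{t}L(v)^{*}P$ directly and recognizes them as $T_{-1}(u_{1},a_{1})$ and $T_{1}(u_{2},-\bar{1}a_{1})$ respectively---note the pairing $L\leftrightarrow T_{-1}$, $L^{*}\leftrightarrow T_{1}$ is the reverse of what you proposed---with $u_{1},u_{2}$ read off explicitly from the entries of $v$. A direct check that $(u_{1},a_{1})$ and $(u_{2},-\bar{1}a_{1})$ lie in $\mathfrak L_{\max}$ then places both conjugates among the \emph{generators} of ${\rm EU}_{2m}(R,\mathfrak L_{\max})$, giving one inclusion immediately. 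The reverse inclusion is handled symmetrically: each $T_{\pm1}(v,a)$ with $(v,a)\in\mathfrak L_{\max}$ is written as $P^{t}L(u)P$ or $P^{t}L(u)^{*}P$ for an explicit $u$ built from the coefficients of $v$ and $a$.
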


Let $P$ be the matrix 
$P=\begin{pmatrix}
0_{2m \times n} & I_{2m}\\
I_{n} & 0_{n \times 2m}
\end{pmatrix}$.
Then we have
\begin{align*}
\hspace{-8mm}P^{t}L(v)P&={\scriptsize
	\begin{pmatrix}
	1 & \ldots & 0 & a_{2m} & 0 & 0 & 0 & \ldots & 0 & 0\\
	\vdots & \vdots & \vdots & \vdots & \vdots & \vdots & \vdots & \vdots & \vdots & \vdots\\
	0 & \ldots & 1 & a_{n+2m-1} & 0 & 0 & 0 & \ldots & 0 & 0\\
	0 & \ldots & 0 & 1 & 0 & 0 & 0 & \ldots & 0 & 0\\
	-\bar{1}^{-1} (\overline{a}_{2m} \varphi_{11} & \ldots & -\bar{1}^{-1} ( \overline{a}_{2m} \varphi_{1n} & a_1  & 1 & \overline{a}_{3} & -\bar{1}^{-1} \overline{a}_{2} & \ldots & \overline{a}_{2m-1} & -\bar{1}^{-1} \overline{a}_{2m-2}\\
	+ \ldots +\overline{a}_{n+2m-1} \varphi_{n1}) & & + \ldots +\overline{a}_{n+2m-1} \varphi_{nn}) & & & & & & &\\
	0 & \ldots & 0 & a_2 & 0 & 1 & 0 & \ldots & 0 & 0\\
	0 & \ldots & 0 & a_{3} & 0 & 0 & 1 & \ldots & 0 & 0\\
	\vdots & \vdots & \vdots & \vdots & \vdots & \vdots & \vdots & \vdots & \vdots & \vdots\\
	0 & \ldots & 0 & a_{2m-2} & 0 & 0 & 0 & \ldots & 0 & 0\\
	0 & \ldots & 0 & a_{2m-1} & 0 &  0 & 0 & \ldots & 0 & 1
	\end{pmatrix}}\\
\\
&=T_{-1}(-a_{2m}v_{1}-a_{2m+1}v_{2}-\ldots-a_{n+2m-1}v_{n}-a_2 e_{2}-a_3e_{-2}- \ldots -a_{2m-2}e_{m}-a_{2m-1}e_{-m},a_1), 
\end{align*} 
and
\begin{align*}
P^{t}L(v)^{*}P&={\scriptsize
	\begin{pmatrix}
	1 & \ldots & 0 & 0 & -\bar{1}^{-1} (\overline{a}_{2m}(\varphi^{-1})_{11}+\overline{a}_{2m+1} (\varphi^{-1})_{12} & 0 & 0 & \ldots & 0 & 0\\
	& & & & +\ldots+\overline{a}_{2m-1+n} (\varphi^{-1})_{1n}) & & & & &\\
	\vdots & \vdots & \vdots & \vdots & \vdots & \vdots & \vdots & \vdots & \vdots & \vdots\\
	0 & \ldots & 1 & 0 & -\bar{1}^{-1} (\overline{a}_{2m}(\varphi^{-1})_{n1}+\overline{a}_{2m+1} (\varphi^{-1})_{n2} & 0 & 0 & \ldots & 0 & 0\\
	& & & & +\ldots+\overline{a}_{2m-1+n} (\varphi^{-1})_{nn} & & & & &\\
	a_{2m} & \ldots & a_{n+2m-1} & 1 & a_1 & a_{2} & a_{3} & \ldots & a_{2m-2} & a_{2m-1}\\
	0 & \ldots & 0 & 0 & 1 & 0 & 0 & \ldots & 0 & 0\\
	0 & \ldots & 0 & 0 & (\bar{1}^{-1})^2 \overline{a}_3 & 1 & 0 & \ldots & 0 & 0\\
	0 & \ldots & 0 & 0 & -\bar{1}^{-1}\overline{a}_2 & 0 & 1 & \ldots & 0 & 0\\
	\vdots & \vdots & \vdots & \vdots & \vdots & \vdots & \vdots & \vdots & \vdots & \vdots\\
	0 & \ldots & 0 & 0 & (\bar{1}^{-1})^2 \overline{a}_{2m-1} & 0 & 0 & \ldots & 0 & 0\\
	0 & \ldots & 0 & 0 & -\bar{1}^{-1}\overline{a}_{2m-2} & 0 & 0 & \ldots & 0 & 1
	\end{pmatrix}}\\
\\
&=T_{1}(\bar{1}^{-1} (\overline{a}_{2m} (\varphi^{-1})_{11}+\overline{a}_{2m+1} (\varphi^{-1})_{12}+\ldots+\overline{a}_{2m-1+n}(\varphi^{-1})_{1n})v_{1}\\
& \hspace{2cm}+\ldots+\bar{1}^{-1} (\overline{a}_{2m} (\varphi^{-1})_{n1}+\overline{a}_{2m+1} (\varphi^{-1})_{n2}+\ldots+\overline{a}_{2m-1+n} (\varphi^{-1})_{nn})v_{n}\\
& \hspace{3cm} -(\bar{1}^{-1})^2 \overline{a}_3 e_{2}+\bar{1}^{-1}\overline{a}_2 e_{-2}- \ldots -(\bar{1}^{-1})^2 \overline{a}_{2m-1}e_{m}+\bar{1}^{-1} \overline{a}_{2m-2}e_{-m}),-\bar{1} a_1).
\end{align*} 
Also by direct computations, we can verify that $(u_1,a_1)$ and $(u_2,-\bar{1} a_1)$ belongs to $\mathfrak L_{\max}$, where
$$u_1 = -a_{2m}v_{1}-a_{2m+1}v_{2}-\ldots-a_{n+2m-1}v_{n}-a_2 e_{2}-a_3e_{-2}- \ldots -a_{2m-2}e_{m}-a_{2m-1}e_{-m} \mbox{  and }$$
\begin{align*}
u_2&=\bar{1}^{-1} (\overline{a}_{2m} (\varphi^{-1})_{11}+\overline{a}_{2m+1} (\varphi^{-1})_{12}+\ldots+\overline{a}_{2m-1+n}(\varphi^{-1})_{1n})v_{1}\\
&\hspace{2cm}+\ldots+\bar{1}^{-1} (\overline{a}_{2m} (\varphi^{-1})_{n1}+\overline{a}_{2m+1} (\varphi^{-1})_{n2}+\ldots+\overline{a}_{2m-1+n} (\varphi^{-1})_{nn})v_{n}\\
&\hspace{3cm} -(\bar{1}^{-1})^2 \overline{a}_3 e_{2}+\bar{1}^{-1}\overline{a}_2 e_{-2}- \ldots -(\bar{1}^{-1})^2 \overline{a}_{2m-1}e_{m}+\bar{1}^{-1} \overline{a}_{2m-2}e_{-m}).
\end{align*}
Thus $P^{t}L(v)P$ and $P^{t} L(v)^{*}P$ belongs to ${\rm EU}_{2m}(R, \mathfrak L_{\max})$. 

\vspace{3mm}
Now to prove the reverse inclusion take an arbitrary element in ${\rm EU}_{2m}(R, \mathfrak L_{\max})$. By Proposition \ref{B1}, the elements of ${\rm EU}_{2m}(R, \mathfrak L_{\max})$ are generated by elements of the form $T_{\pm i}(v,a)$ where $(v,a) \in \mathfrak{L}_{\max}$ and $\langle e_1, v \rangle=\langle e_{-1}, v \rangle=0$. Therefore $v$ has the form
$$v=t_{1}v_{1}+ \ldots +t_{n}v_{n}+b_{2}e_{2}+b_{-2}e_{-2}+\ldots+b_{m}e_{m}+b_{-m}e_{-m}.$$ Then 
$T_{1}(v,a)=P^t R(u)P$,
where 
\begin{align*}
u&=(-\bar{1}^{-1}a,\bar{1}^{-1}\overline{b}_{-2},-(\bar{1}^{-1})^2 \overline{b}_{2},\ldots,\bar{1}^{-1}\overline{b}_{-m},-(\bar{1}^{-1})^2 \overline{b}_{m},(\bar{1}^{-1})^2 ( \overline{t}_1 \varphi_{11} + \ldots +\overline{t}_n \varphi_{1n}),\\
& \hspace{6cm} \ldots,(\bar{1}^{-1})^2 ( \overline{t}_1 \varphi_{n1} + \ldots +\overline{t}_n \varphi_{nn}))
\end{align*}
and $T_{-1}(v,a)=P^t L(u)P$, where $u=(a,-b_{2},-b_{-2},\ldots,-b_{m},-b_{-m},-t_{1},\ldots,-t_{n}).$  

\vspace{3mm}
Therefore the group generated by $L(v)$ and $L(v)^{*}$, for $v \in R^{2m+n-1}$ is congruent to the Petrov's odd elementary hyperbolic unitary group ${\rm EU}_{2m}(R, \mathfrak L_{\max})$.

\begin{remark}
	The matrix $P$ we got in Theorem \ref{A1} is invertible and $P^{-1}=P^t$. Thus the two groups considered here are conjugate subgroups of $\rm{GL}_{n+2m}(R).$ 
\end{remark}

\begin{example}
	Here we illustrate Theorem \ref{A1} in which in the special case where $R$ is a commutative ring and the involution is $a \mapsto \bar{a}=-a$. In this case, the anti-Hermitian form becomes Hermitian. The form matrix for Petrov's group in this case is $\varPsi=\widetilde{\psi}_{m} \perp \varphi$, where $\varphi$ is the form matrix for $V_0$ and $\widetilde{\psi}_{r}$ for $r>1$ is defined as follows.
	
	\vspace{3mm}
	
	Let $\widetilde{\psi_{1}}=\begin{pmatrix}
	0 & 1\\
	1 & 0
	\end{pmatrix}$
	and $\widetilde{\psi}_{r}=\widetilde{\psi}_{1} \perp \widetilde{\psi}_{r-1}$ for $r >1$. The form matrix $\varPsi$ is of the form 
	$\begin{pmatrix}
	0 & c\\
	c^{t} & \mu
	\end{pmatrix}$, where 
	$c=\begin{pmatrix}
	1 & \ldots & 0
	\end{pmatrix} \in R^{n+2m-1}$
	and $\mu$ is the $(n+2m) \times (n+2m)$ matrix given by $\mu=\begin{pmatrix}
	0 & 0 & 0\\
	0 & \widetilde{\psi}_{m-2} & 0\\
	0 & 0 & \varphi
	\end{pmatrix}$, and $\varPsi^{-1}$ is of the form
	$\begin{pmatrix}
	0 & d\\
	d^{t} & \rho
	\end{pmatrix}$, where 
	$d=\begin{pmatrix}
	1 & \ldots & 0
	\end{pmatrix} \in R^{n+2m-1}$ and 
	$\rho=\begin{pmatrix}
	0 & 0 & 0\\
	0 & \widetilde{\psi}_{m-2} & 0\\
	0 & 0 & \varphi^{-1}
	\end{pmatrix}$. 
	
	\vspace{2mm}
	
	Thus for $v=(a_1, \ldots, a_{n+2m-1}) \in R^{n+2m-1}$, we get the matrices $\alpha$ and $\beta$ as follows:
	$$\alpha =I_{n+2m-1}-d^t v \mu \mbox{ and } \beta = I_{n+2m-1} - \rho v^t c.$$      
	
	\vspace{2mm}
	
	The matrices $\alpha$ and $\beta$ are of the following form.
	$$\alpha = {\scriptsize{\begin{pmatrix}
			1 & -a_3 & -a_2 & \ldots & -a_{2m-1} & -a_{2m-2} & -a_{2m}\varphi_{11}-a_{2m+1} \varphi_{21} & \ldots & -a_{2m}\varphi_{1n}-a_{2m+1} \varphi_{2n}\\
			& & & & & & \hspace{5mm}-\ldots-a_{2m-1+n} \varphi_{n1} & & \hspace{5mm}-\ldots-a_{2m-1+n} \varphi_{nn}\\
			0 & 1 & 0 & \ldots & 0 & 0 & 0 & \ldots & 0\\
			0 & 0 & 1 & \ldots & 0 & 0 & 0 & \ldots & 0\\
			\vdots & \vdots & \vdots & \vdots & \vdots & \vdots & \vdots & \vdots & \vdots\\
			0 & 0 & 0 & \ldots & 0 & 0 & 0 & \ldots & 1
			\end{pmatrix}}},$$
	
	$$\mbox{ and }\qquad \beta = {\scriptsize{\begin{pmatrix}
			1 & 0 & 0 & \ldots & 0\\
			-a_3 & 1 & 0 & \ldots & 0\\
			-a_2 & 0 & 1 & \ldots & 0\\
			\vdots & \vdots & \vdots & \vdots & \vdots\\
			-a_{2m-1} & 0 & 0 & \ldots & 0\\
			-a_{2m-2} & 0 & 0 & \ldots & 0\\
			a_{2m}(\varphi^{-1})_{11}+a_{2m+1} (\varphi^{-1})_{12}+\ldots+a_{2m-1+n} (\varphi^{-1})_{1n} & 0 & 0 & \ldots & 0\\
			\vdots & \vdots & \vdots & \vdots & \vdots \\
			a_{2m}(\varphi^{-1})_{n1}+a_{2m+1} (\varphi^{-1})_{n2}+\ldots+a_{2m-1+n} (\varphi^{-1})_{nn} & 0 & 0 & \ldots & 1
			\end{pmatrix}}}.$$
	
	\vspace{3mm}
	
	Therefore, we have the  matrices $L(v)$ and $L(v)^{*}$ of the form
	$$L(v)= {\scriptsize{\begin{pmatrix}
			1 & 0 & 0 & 0 & \ldots & 0 & 0 & 0 & \ldots & 0\\
			a_1 & 1 & -a_3 & -a_2 & \ldots & -a_{2m-1} & -a_{2m-2} & -a_{2m}\varphi_{11}-a_{2m+1} \varphi_{21} & \ldots & -a_{2m}\varphi_{1n}-a_{2m+1} \varphi_{2n}\\
			& & & & & & & -\ldots-a_{2m-1+n} \varphi_{n1} & & -\ldots-a_{2m-1+n} \varphi_{nn}\\ 
			a_2 & 0 & 1 & 0 & \ldots & 0 & 0 & 0 & \ldots & 0\\
			a_3 & 0 & 0 & 1 & \ldots & 0 & 0 & 0 & \ldots & 0\\
			\vdots & \vdots & \vdots & \vdots & \vdots & \vdots & \vdots & \vdots & \vdots & \vdots\\
			a_{n+2m-1} & 0 & 0 & 0 & \ldots & 0 & 0 & 0 & \ldots & 1
			\end{pmatrix}}} $$
	
	$$\mbox{ and }L(v)^{*}={\scriptsize{\begin{pmatrix}
			1 & a_1 & a_2 & \ldots & a_{n+2m-1}\\
			0 & 1 & 0 & \ldots & 0\\
			0 & -a_3 & 1 & \ldots & 0\\
			0 & -a_2 & 0 & \ldots & 0\\
			\vdots & \vdots & \vdots & \vdots & \vdots \\
			0 & -a_{2m-1} & 0 & \ldots & 0\\
			0 & -a_{2m-2} & 0 & \ldots & 0\\
			0 & a_{2m}(\varphi^{-1})_{11}+a_{2m+1} (\varphi^{-1})_{12}+\ldots+a_{2m-1+n} (\varphi^{-1})_{1n} & 0 & \ldots & 0\\
			\vdots & \vdots & \vdots & \vdots & \vdots\\
			0 & a_{2m}(\varphi^{-1})_{n1}+a_{2m+1} (\varphi^{-1})_{n2}+\ldots+a_{2m-1+n} (\varphi^{-1})_{nn} & 0 & \ldots & 1
			\end{pmatrix}}}.$$
	
	\vspace{3mm}
	
	By Remark \ref{F}, both the matrices $L(v)$ and $L(v)^{*}$ in this example  belongs to ${\rm U}_{2m}(R, \mathfrak L_{\max})$. 
	
	\vspace{2mm}
	Now let $P$ be the matrix 
	$P=\begin{pmatrix}
	0_{2m \times n} & I_{2m}\\
	I_{n} & 0_{n \times 2m}
	\end{pmatrix}$.
	We have
	
	\begin{align*}
	P^{t}L(v)P&={\scriptsize
		\begin{pmatrix}
		1 & \ldots & 0 & a_{2m} & 0 & 0 & 0 & \ldots & 0 & 0\\
		\vdots & \vdots & \vdots & \vdots & \vdots & \vdots & \vdots & \vdots & \vdots & \vdots\\
		0 & \ldots & 1 & a_{n+2m-1} & 0 & 0 & 0 & \ldots & 0 & 0\\
		0 & \ldots & 0 & 1 & 0 & 0 & 0 & \ldots & 0 & 0\\
		a_{2m}\varphi_{11}+\ldots & \ldots & a_{2m}\varphi_{1n}+\ldots & a_1 & 1 & -a_{3} & -a_{2} & \ldots & -a_{2m-1} & -a_{2m-2}\\
		+a_{n+2m-1} \varphi_{n1} & & +a_{n+2m-1} \varphi_{nn} & & & & & & &\\
		0 & \ldots & 0 & a_2 & 0 & 1 & 0 & \ldots & 0 & 0\\
		0 & \ldots & 0 & a_3 & 0 & 0 & 1 & \ldots & 0 & 0\\
		\vdots & \vdots & \vdots & \vdots & \vdots & \vdots & \vdots & \vdots & \vdots & \vdots\\
		0 & \ldots & 0 & a_{n+2m-2} & 0 & 0 & 0 & \ldots & 0 & 0\\
		0 & \ldots & 0 & a_{n+2m-1} & 0 & 0 & 0 & \ldots & 0 & 1
		\end{pmatrix}}\\
	\\
	&=T_{-1}(-a_{2m}v_1- \ldots - a_{n+2m-1} v_n -a_2 e_2 -a_3 e_{-2}- \ldots -a_{2m-2}e_{m}-a_{2m-1}e_{-m},a_1), \mbox{ and }
	\end{align*}
	
	\begin{align*}
	P^{t}L(v)^{*}P &=
	{\scriptsize 
		\begin{pmatrix}
		1 & \ldots & 0 & 0 & -(a_{2m}(\varphi^{-1})_{11}+ \ldots a_{n+2m-1}(\varphi^{-1})_{n1}) & 0 & 0 & \ldots & 0 & 0\\
		\vdots & \vdots & \vdots & \vdots & \vdots & \vdots & \vdots & \vdots & \vdots & \vdots\\
		0 & \ldots & 1 & 0 & -(a_{2m}(\varphi^{-1})_{1n}+ \ldots -a_{n+2m-1}(\varphi^{-1})_{nn}) & 0 & 0 & \ldots & 0 & 0\\
		a_{2m} & \ldots &a_{n+2m-1} & 1 & a_1 & a_{2} & a_{3} & \ldots & a_{2m-2} & a_{2m-1}\\
		0 & \ldots & 0 & 0 & 1 & 0 & 0 & \ldots & 0 & 0\\
		0 & \ldots & 0 & 0 & -a_{3} & 1 & 0 & \ldots & 0 & 0\\
		0 & \ldots & 0 & 0 & -a_{2} & 0 & 1 & \ldots & 0 & 0\\
		\vdots & \vdots & \vdots & \vdots & \vdots & \vdots & \vdots & \vdots & \vdots & \vdots\\
		0 & \ldots & 0 & 0 & -a_{2m-1} & 0 & 0 & \ldots & 0 & 0\\
		0 & \ldots & 0 & 0 & -a_{2m-2} & 0 & 0 & \ldots & 0 & 1
		\end{pmatrix}} \\
	\\
	&=T_1((a_{2m}(\varphi^{-1})_{11}+ \ldots a_{n+2m-1}(\varphi^{-1})_{n1})v_{1}+\ldots+ (a_{2m}(\varphi^{-1})_{1n}\\
	&\hspace{8mm}+ \ldots +a_{n+2m-1}(\varphi^{-1})_{nn})v_{n}+a_3 e_{2}+a_2e_{-2}+ \ldots +a_{2m-1}e_{m}+a_{2m-2}e_{-m},a_1).
	\end{align*}
	
	\noindent Also, $(u_1,a_1)$ and $(u_2,a_1)$ belongs to $\mathfrak L_{\max}$, where 
	$$u_1=-a_{2m}v_1- \ldots - a_{n+2m-1} v_n -a_2 e_2 -a_3 e_{-2}- \ldots -a_{2m-2}e_{m}-a_{2m-1}e_{-m}~ \mbox{ and }$$ 
	\begin{align*}
	u_2&=a_{2m}(\varphi^{-1})_{11}+ \ldots +a_{n+2m-1}(\varphi^{-1})_{n1})v_{1}+\ldots+ (a_{2m}(\varphi^{-1})_{1n}+ \ldots +a_{n+2m-1}(\varphi^{-1})_{nn})v_{n}\\
	& \hspace{2cm} + a_3 e_{2}+a_2e_{-2}+ \ldots +a_{2m-1}e_{m}+a_{2m-2}e_{-m}.
	\end{align*}
	Thus $P^{t}L(v)P$ and $P^{t} L(v)^{*}P$ belongs to ${\rm EU}_{2m}(R, \mathfrak L_{\max})$. 
	
	\vspace{2mm}
	
	Now for the reverse inclusion take an arbitrary element in ${\rm EU}_{2m}(R, \mathfrak L_{\max})$. By Proposition \ref{B1}, the elements of ${\rm EU}_{2m}(R, \mathfrak L_{\max})$ are generated by elements of the form $T_{\pm i}(v,a)$ where $(v,a) \in \mathfrak L_{\max}$ and $\langle e_1, v \rangle=\langle e_{-1}, v \rangle=0$. Therefore $v$ has the form
	$v=t_{1}v_{1}+ \ldots +t_{n}v_{n}+b_{2}e_{2}+b_{-2}e_{-2}+\ldots+b_{m}e_{m}+b_{-m}e_{-m}$. Then we have
	\begin{align*}
	T_1(v,a)&=P^t L((a,b_{-2},b_{2},\ldots,b_{-m},b_{m},t_{1},\ldots,t_{n}))P, ~\mbox{  and }\\
	\vspace{2mm}
	T_{-1}(v,a)&=P^t R((a,b_{2},b_{-2},\ldots,b_{m},b_{-m},\varphi_{11}t_1+\ldots+\varphi_{n1}t_{n}, \ldots, \varphi_{1n}t_1+\ldots+\varphi_{nn}t_{n}))P.
	\end{align*}
	
	Therefore the group generated by $L(v)$ and $L(v)^{*}$ for $v \in R^{2m+n-1}$ is congruent to the Petrov's group ${\rm EU}_{2m}(R, \mathfrak L_{\max})$.
\end{example}

\section*{Acknowledgements}
 The first author would like to acknowledge the support by ``Seed Money for New Research Initiatives [2017-18]'', Cochin University of Science and Technology. The second author thank Department of Science and Technology (DST) for the INSPIRE fellowship which supported this work.

\bibliographystyle{amsplain}

\begin{thebibliography}{}
\bibitem{VasersteinSuslin1976}{L. N. Vaser\v{s}te\u{\i}n and A. A. Suslin}, {Serre's problem on projective modules over polynomial rings and algebraic {$K$}-theory}. {\it Izv. Akad. Nauk SSSR Ser. Mat.} {40} {(1976)}, {993--1054, 1199}.
 
\bibitem{ChattopadhyayRao2016}{P. Chattopadhyay and R. A. Rao}. {Equality of elementary linear and symplectic orbits with respect to an alternating form}. { \it J. Algebra}. {451} {(2016)}, {46–64}.

\bibitem{Petrov2005} {V. A. Petrov}. {Odd unitary groups}. { \it J. Math. Sci. 130(3)} {(2005)}, {4752-4766}.

\bibitem{Abe1977} {Abe E.}. {Coverings of twisted {C}hevalley groups over commutative rings} \textit{Sci. Rep. Tokyo Kyoiku Daigaku Sect. A}. {366-382(13)} (1977), {194--218}.

\bibitem{Bak1969} {A. Bak}. The stable structure of quadratic modules.  PhD thesis. {New York (NY), USA: Columbia University} (1969).

\bibitem{Tang1998} {G. Tang}, {Hermitian groups and {$K$}-theory}. {\it $K$-Theory}. {13} {(1998)}, {209--267}.

\bibitem{BakPreusser2018} {A. Bak and R. Preusser}. {The E-normal structure of odd dimensional unitary groups}. \textit{J. Pure Appl. Algebra}.222(9) (2018), {2823–2880}.

\bibitem{YuLiLiu2018}{W. Yu, Y. Li and H. Liu}. {A classification of subgroups of odd unitary groups}. {\it Comm. Algebra}. {46} {(2018)}, {3795--3805}.

\bibitem{Preusser2020} {R. Preusser}. {The {E}-normal structure of {P}etrov's odd unitary groups over commutative rings}. {\it Comm. Algebra}. {48} (2020), {1114--1131}.

\bibitem{AmbilyRao2021}{A. A. Ambily and R. A. Rao}. {Visualizing the DSER elementary orthogonal group}. Preprint (2021).
\end{thebibliography}

\end{document}